\newtheorem{theorem}{Theorem}[section]
\newtheorem{lemma}[theorem]{Lemma}
\newtheorem{proposition}[theorem]{Proposition}
\newtheorem{example}[theorem]{Example}
\theoremstyle{definition}
\newtheorem{definition}[theorem]{Definition}
\theoremstyle{remark}
\newtheorem{remark}[theorem]{Remark}
\numberwithin{equation}{section}
\newcommand{\AKMSbracket}[1]{\left[#1\right]}
\newcommand{\AKMSpara}[1]{\left(#1\right)}
\begin{document}
\author{Abdennour Kitouni\footnote{e-mail: abdennour.kitouni@uha.fr}\\ Université de Haute-Alsace,\\ Mulhouse France
\and Abdenacer Makhlouf\footnote{email: abdenacer.makhlouf@uha.fr} \\ Université de Haute-Alsace, \\Mulhouse France
}
\title{On Structure and Central Extensions of $(n+1)$-Lie Algebras Induced by $n$-Lie Algebras}
\date{}

\maketitle

\abstract{
The purpose of this paper is to investigate $(n+1)$-Lie algebras induced by $n$-Lie algebras and  trace maps. We highlight a comparison of   their structure  properties (solvability, nilpotency) and  the cohomology groups as well as central extensions. Moreover, we provide for dimensions $n$, $n+1$ and $n+2$, the classification of $n$-Lie algebras which are induced by $(n-1)$-Lie algebras.
}
\section*{Introduction}
In this paper, we investigate $(n+1)$-Lie algebras constructed from $n$-Lie algebras and generalized trace maps. They are called $(n+1)$-Lie algebras induced by $n$-Lie algebras. 

Ternary Lie algebras   appeared first in Nambu's generalization of Hamiltonian mechanics \cite{Nambu:GenHD} which uses a generalization of Poisson algebras with a ternary bracket. The algebraic formulation of Nambu mechanics is due to Takhtajan while the structure of $n$-Lie algebra was studied by Filippov \cite{Filippov:nLie} then completed by Kasymov in \cite{Kasymov:nLie}, where solvability and nilpotency properties were studied. 

The cohomology of $n$-Lie algebras, generalizing the Chevalley-Eilenberg Lie algebras cohomology, was first introduced by Takhtajan \cite{Takhtajan:cohomology} in its simplest form, later a complex adapted to the study of formal deformations was introduced by Gautheron \cite{Gautheron:Rem}, then reformulated by Daletskii and Takhtajan \cite{Dal_Takh} using the notion of base Leibniz algebra of an $n$-Lie algebra.

In \cite{almy:quantnambu}, the authors introduced a realization of the quantum Nambu bracket in terms of matrices (using the commutator and the trace of matrices). This construction was generalized in \cite{ams:ternary} to the case of any Lie algebra where the commutator is replaced by the Lie bracket, and the matrix trace is replaced by linear forms having similar properties. Therefore one obtains  ternary brackets which define  $3$-Lie algebras, called $3$-Lie algebras induced by Lie algebras. This construction was generalized to the case of $n$-Lie algebras in \cite{ams:n}. 

The aim of this paper is to  study  the relationships between the structure properties (solvability, nilpotency,...), central extensions, and the cohomology of an $n$-Lie algebra and its induced $(n+1)$-Lie algebra.
In Section \ref{AKMS:n-Lie}, we recall the basics about $n$-Lie algebras, $n$-Lie algebras cohomology and the construction of $(n+1)$-Lie algebras induced by $n$-Lie algebras. In Section \ref{AKMS:Structure} we discuss structure properties of $(n+1)$-Lie algebras induced by $n$-Lie algebras, likewise subalgebras, ideals, solvability and nilpotency. It is shown that the induced $(n+1)$-Lie algebra is always solvable and a  nilpotent $n$-Lie algebra gives rise to an  induced $(n+1)$-Lie algebras which is nilpotent as well.  In Section \ref{AKMS:Extension}, we deal with central extensions. We study the relationships between a central extension of an $n$-Lie algebra and those of an $(n+1)$-Lie algebra it induces. Moreover we study the corresponding cohomology. In Section \ref{AKMS:Cohomology},  we compare $1$-cocycles and $2$-cocycles, with respect to the adjoint cohomology and scalar cohomology,  of an $n$-Lie algebra and the induced $(n+1)$-Lie algebra.  In Section \ref{AKMS:Classifications}, we provide a sufficient condition to recognize $(n+1)$-Lie algebras that are induced by  $n$-Lie algebras.  We determine all $n$-Lie algebras induced by $(n-1)$-Lie algebras up to dimension $n+2$. This classification is based on classifications given in \cite{Bai:nLie:n+2} and \cite{Filippov:nLie}. Hence, we provide a list of all  possible induced $3$-Lie algebras by $n$-dimensional Lie algebras with $n\leq 4$.

\section{Preliminaries}\label{AKMS:n-Lie}
In this paper, all vector spaces are over a field $\mathbb{K}$ of characteristic $0$.
The structure of $n$-Lie algebra was introduced by Filippov \cite{Filippov:nLie}, then investigated deeply by Kasymov \cite{Kasymov:nLie}. Let us recall of some basic definitions.
\subsection{Definitions and first results}
  \begin{definition}
    An $n$-Lie algebra $\AKMSpara{A,\AKMSbracket{\cdot,...,\cdot}}$ is a vector space $A$ together with a skew-symmetric $n$-linear map $\AKMSbracket{\cdot,...,\cdot} : A^n \to A$ such that:
    \begin{equation}
 \AKMSbracket{x_1,...,x_{n-1},\AKMSbracket{y_1,...,y_n}} = \sum_{i=1}^n \AKMSbracket{y_1,...,\AKMSbracket{x_1,...,x_{n-1},y_i},...,y_n}, \label{AKMS-FI}
    \end{equation}
   for all $x_1,...,x_{n-1},y_1,...,y_n \in A$. This condition is called the fundamental identity or Filippov identity. For $n=2$, identity (\ref{AKMS-FI}) becomes the Jacobi identity and we get the definition of a Lie algebra.
  \end{definition} 
  Recall that the $n$-linear bracket is said to be skew-symmetric if for all $\sigma$ in the permutation group $S_n$ and $x_1,\cdots,x_n\in A$, we have $[x_{\sigma(1)},\cdots,x_{\sigma(n)}]=sgn(\sigma)[x_{1},\cdots,x_{n}]$,  where $sgn(\sigma)$ is the signature of $\sigma$.

\begin{definition}[Fundamental object]
Let $(A,\AKMSbracket{\cdot,...,\cdot})$ be an $n$-Lie algebra. A fundamental object $X$ is defined by $(n-1)$ elements of $A$, $x_1,...,x_{n-1}$, on which it is skew-symmetric, that is $X \in \wedge^{n-1}A$.

We define the action of fundamental objects on $A$ by : \[ \forall X \in \wedge^{n-1}A, \forall z \in A : X \cdot z = ad_X(z) = \AKMSbracket{x_1,...,x_{n-1},z}. \]

The multiplication (composition) of two fundamental objects $X, Y$ is given by : \[ X \cdot Y = \sum_{i=1}^{n-1} \left( y_1,...,X \cdot y_i , ..., y_{n-1} \right). \]
\end{definition}

\begin{proposition}
The multiplication of fundamental objects satisfies:
\begin{itemize}
\item \[ X \cdot (Y \cdot Z) = (X \cdot Y) \cdot Z + Y \cdot (X \cdot Z) \tag{Leibniz Rule}, \]
\item \[ ad_{X\cdot Y} = - ad_{Y\cdot X}, \]
\item \[ ad_{X \cdot Y} = ad_X\circ ad_Y - ad_Y \circ ad_X. \]
\end{itemize}
\end{proposition}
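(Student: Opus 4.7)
The three identities all follow from the Filippov identity once the action and multiplication of fundamental objects are unwound, so my plan is to start with the formula most directly equivalent to (\ref{AKMS-FI}) and deduce the others from it.

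\textbf{Step 1: The commutator formula.} I would first prove $ad_{X\cdot Y} = ad_X\circ ad_Y - ad_Y\circ ad_X$. Given $z\in A$, by definition
\[
ad_{X\cdot Y}(z) \;=\; \sum_{i=1}^{n-1}\AKMSbracket{y_1,\dots,ad_X(y_i),\dots,y_{n-1},z}.
\]
Apply the Filippov identity (\ref{AKMS-FI}) with $y_n:=z$: the sum on the right of (\ref{AKMS-FI}) splits into the $n-1$ terms above plus the single term $\AKMSbracket{y_1,\dots,y_{n-1},ad_X(z)}$. Rearranging gives precisely $ad_{X\cdot Y}(z) = ad_X(ad_Y(z)) - ad_Y(ad_X(z))$.

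\textbf{Step 2: Antisymmetry.} The identity $ad_{X\cdot Y} = -ad_{Y\cdot X}$ is then immediate: swapping $X$ and $Y$ in Step~1 yields $ad_{Y\cdot X} = ad_Y\circ ad_X - ad_X\circ ad_Y$, which is the negative of $ad_{X\cdot Y}$.

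\textbf{Step 3: The Leibniz rule.} Write $Z=(z_1,\dots,z_{n-1})$ and expand everything from the definition of the multiplication of fundamental objects. Then
\[
X\cdot(Y\cdot Z) \;=\; \sum_{j}(z_1,\dots,X\cdot(Y\cdot z_j),\dots,z_{n-1}) \;+\; \sum_{j}\sum_{k\neq j}(z_1,\dots,X\cdot z_k,\dots,Y\cdot z_j,\dots,z_{n-1}),
\]
while
\[
Y\cdot(X\cdot Z) \;=\; \sum_{j}(z_1,\dots,Y\cdot(X\cdot z_j),\dots,z_{n-1}) \;+\; \sum_{j}\sum_{k\neq j}(z_1,\dots,X\cdot z_k,\dots,Y\cdot z_j,\dots,z_{n-1}).
\]
The mixed double sums coincide, so the difference $X\cdot(Y\cdot Z)-Y\cdot(X\cdot Z)$ collapses to the single-position terms, whose $j$-th entry is $X\cdot(Y\cdot z_j)-Y\cdot(X\cdot z_j)$. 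By Step~1 this equals $ad_{X\cdot Y}(z_j)=(X\cdot Y)\cdot z_j$, and summing over $j$ recovers $(X\cdot Y)\cdot Z$ by the definition of the product applied with the sum of fundamental objects $X\cdot Y$ in the first slot.

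\textbf{Expected obstacle.} The mechanics of Steps~1 and~2 are essentially a one-line rewriting of (\ref{AKMS-FI}). The only delicate point is Step~3: one must be careful that the multiplication by $X\cdot Y$, a \emph{sum} of fundamental objects, is extended linearly, and that the cross terms (positions $k\neq j$) match exactly between $X\cdot(Y\cdot Z)$ and $Y\cdot(X\cdot Z)$. Once the bookkeeping is set up, the identity reduces to the coincidence of these cross terms plus the application of Step~1 in each slot.
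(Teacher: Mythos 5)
Your proof is correct, and since the paper states this proposition without any proof, there is nothing to compare it against; your argument (reading the Filippov identity with $y_n=z$ to get the commutator formula, deducing antisymmetry by swapping $X$ and $Y$, and cancelling the matching cross terms to get the Leibniz rule) is exactly the standard derivation the paper leaves implicit. The one point worth making explicit in a written version is that $ad$ and the product are extended linearly to sums of fundamental objects such as $X\cdot Y$, which you already flag as the delicate step.
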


\begin{remark}
Using the notion of fundamental object, we can rewrite the fundamental identity as:
\[ X\cdot (Y\cdot z) = (X\cdot Y)\cdot z + Y \cdot (X\cdot z), \qquad \forall X,Y \in \wedge^{n-1}A, \forall z \in A. \]
\end{remark}
\begin{definition}
Let $(A,\AKMSbracket{\cdot,...,\cdot})$ be an $n$-Lie algebra, and $I$ a subspace of $A$. We say that $I$ is an ideal of $A$ if, for all $i\in I, x_1,...,x_{n-1}\in A$, it holds that $\AKMSbracket{i,x_1,...,x_{n-1}}\in I$.
\end{definition}

\begin{lemma}
Let $(A,\AKMSbracket{\cdot,...,\cdot})$ be an $n$-Lie algebra and $I_1,....,I_n$ be ideals of $A$, then $I=\AKMSbracket{I_1,...,I_n}$ is an ideal of $A$.
\end{lemma}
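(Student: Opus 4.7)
The plan is to unwind the definition of $I = [I_1,\ldots,I_n]$, which is the linear span of elements of the form $[i_1,\ldots,i_n]$ with $i_k \in I_k$, and then verify the ideal condition by applying the Filippov identity to such a generator.

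By linearity of the bracket, it suffices to show that for a generic generator $i = [i_1,\ldots,i_n]$ (with $i_k \in I_k$) and arbitrary elements $x_1,\ldots,x_{n-1} \in A$, the bracket $[i,x_1,\ldots,x_{n-1}]$ lies in $I$. Using skew-symmetry, this reduces (up to sign) to showing $[x_1,\ldots,x_{n-1},[i_1,\ldots,i_n]] \in I$.

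The key step is then a direct application of the fundamental identity (\ref{AKMS-FI}) with $y_k = i_k$, which gives
\[
[x_1,\ldots,x_{n-1},[i_1,\ldots,i_n]] = \sum_{k=1}^n [i_1,\ldots,[x_1,\ldots,x_{n-1},i_k],\ldots,i_n].
\]
Since each $I_k$ is an ideal of $A$, the element $[x_1,\ldots,x_{n-1},i_k]$ belongs to $I_k$. Therefore every term on the right-hand side is a generator of $I = [I_1,\ldots,I_n]$, and the sum lies in $I$.

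There is no real obstacle here: the proof is essentially one line of the Filippov identity combined with the ideal hypothesis on each $I_k$. The only minor care needed is the initial reduction to generators by multilinearity and the sign adjustment coming from skew-symmetry, but neither involves any subtlety.
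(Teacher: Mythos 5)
Your proof is correct: the reduction to generators by multilinearity, the sign adjustment by skew-symmetry, and the single application of the fundamental identity (\ref{AKMS-FI}) together with the ideal hypothesis on each $I_k$ give exactly the required conclusion. The paper states this lemma without proof (it is a standard fact going back to Filippov and Kasymov), and your argument is precisely the standard one the paper implicitly relies on.
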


\begin{definition}
Let $(A,\AKMSbracket{\cdot,...,\cdot})$ be an $n$-Lie algebra and $I$ be an ideal of $A$. Define the derived series of $I$ by:
\[D^0(I)=I \text{ and } D^{p+1}(I)=\AKMSbracket{D^p(I),...,D^p(I)},\]
and the central descending series of $I$ by:
\[C^0(I)=I \text{ and } C^{p+1}(I)=\AKMSbracket{C^p(I),I,...,I}.\]
\end{definition}

\begin{definition}
Let $(A,\AKMSbracket{\cdot,...,\cdot})$ be an $n$-Lie algebra, and $I$ an ideal of $A$. The ideal $I$ is said to be solvable if there exists $p \in \mathbb{N}$ such that $D^p(I)=\{0\}$. It is said to be nilpotent if there exists $p \in \mathbb{N}$ such that $C^p(I)=\{0\}$.
\end{definition}

\begin{definition}
Let $(A,\AKMSbracket{\cdot,...,\cdot})$ be an $n$-Lie algebra. The center of $A$, denoted by $Z(A)$, is an ideal of $A$ defined by:
\[ Z(A) = \left\{ z \in A : \AKMSbracket{z,x_1,...,x_{n-1}} = 0, \forall x_1,...,x_{n-1} \in A  \right\}. \]
\end{definition}

\begin{definition}
An $n$-Lie algebra $(A,\AKMSbracket{\cdot,...,\cdot})$ is said to be simple if $\left. D^1(A) \neq \{ 0 \} \right.$ and if it has no ideals other than $\{0\}$ and $A$. A direct sum of simple $n$-Lie algebras is said to be semi-simple.
\end{definition}

\subsection{Cohomology of $n$-Lie algebras }
Now, let us recall the main definitions for $n$-Lie algebras cohomology, for references see \cite{Dal_Takh}, \cite{aip:review}, \cite{Gautheron:Rem} and \cite{Takhtajan:cohomology}.

\begin{definition}
Let $(A,[\cdot ,..., \cdot])$ be an $n$-Lie algebra. An $A$-valued $p$-cochain is a linear map $\psi : (\wedge^{n-1} A)^{\otimes p-1} \wedge A \to A$.

 The coboundary operator for the adjoint action is given by :
\begin{align*}
d^p\psi(X_1,...,X_p,z) &= \sum_{j=1}^p \sum_{k=j+1}^p (-1)^j \psi \left( X_1,...,\widehat{X_j},..., X_j \cdot X_k, ..., X_p,z \right)\\
&+ \sum_{j=1}^p (-1)^j \psi \left( X_1,...,\widehat{X_j},...,X_p,X_j \cdot z \right) \\
&+ \sum_{j=1}^p (-1)^{j+1} X_j \cdot \psi \left( X_1,...,\widehat{X_j},...,X_p, z \right) \\
&+ (-1)^{p-1} \left( \psi (X_1,...,X_{p-1}, \quad) \cdot X_p \right) \cdot z,
\end{align*}
where \[ \psi (X_1,...,X_{p-1}, \quad) \cdot X_p = \sum_{i=1}^{n-1} \left(x_p^1,...,x_p^{i-1},\psi (X_1,...,X_{p-1}, ...,x_p^{n-1}\right). \]
\end{definition}

\begin{definition}
Let $(A,[\cdot,...,\cdot])$ be an $n$-Lie algebra. A $\mathbb{K}$-valued $p$-cochain is a linear map $\psi : (\wedge^{n-1} A)^{\otimes p-1} \wedge A \to \mathbb{K}$.

 The coboundary operator for the trivial action is given by :
\begin{align*}
d^p\psi(X_1,...,X_p,z)&= \sum_{j=1}^p \sum_{k=j+1}^p (-1)^j \psi \left( X_1,...,\widehat{X_j},..., X_j \cdot X_k, ..., X_p,z \right)\\
&+ \sum_{j=1}^p (-1)^j \psi \left( X_1,...,\widehat{X_j},...,X_p,X_j \cdot z \right). 
\end{align*}

\end{definition}

The elements of $Z^p(A,A) = \ker d^p$ (resp. $Z^p(A,\mathbb{K})$) are called $p$-cocycles, those of $B^p(A,A)= \operatorname{Im} d^{p-1}$ (resp. $B^p(A,\mathbb{K})$) are called coboundaries. The quotient $H^p=\frac{Z^p}{B^p}$ is the $p$-th cohomology group. We sometimes add in subscript the representation used in the cohomology complex, for example $Z_{ad}^p(A,A)$ denotes the set of $p$-cocycle for the adjoint cohomology and $Z_{0}^p(A,\mathbb{K})$  denotes the set of $p$-cocycle for the scalar cohomology.

In particular, the elements of $Z^1(A,A)$ are the derivations. Recall that a derivation of an $n$-Lie algebra is a linear map $f : A \to A$ satisfying:
\[ f\AKMSpara{\AKMSbracket{x_1,...,x_n}} = \sum_{i=1}^n \AKMSbracket{x_1,...,f(x_i),...,x_n},\forall x_1,...,x_n \in A. \]

\subsection{$(n+1)$-Lie algebras induced by $n$-Lie algebras}

In \cite{ams:ternary} and \cite{ams:n}, the authors introduced a construction of a $3$-Lie algebra from a Lie algebra, and more generally an $(n+1)$-Lie algebra from an $n$-Lie algebra were introduced. 

\begin{definition}
  Let $\phi:A^n\to A$ be an $n$-linear map and $\tau:A\rightarrow \mathbb{K}$ be a linear map. Define $\phi_\tau:A^{n+1}\to A$ by:
  \begin{align}
    \phi_\tau(x_1,...,x_{n+1}) = \sum_{k=1}^{n+1}(-1)^k\tau(x_k)\phi(x_1,...,\hat{x}_k,...,x_{n+1}),
  \end{align}
where the hat over $\hat{x}_k$ on the right hand side means that $x_{k}$ is excluded, that is 
 $\phi$ is calculated on $(x_1,\ldots, x_{k-1}, x_{k+1}, ... , x_{n+1})$. 
\end{definition}
We will not be concerned with just any linear map $\tau$, but rather maps that have a generalized trace property. Namely:

\begin{definition}
  For $\phi:A^n\to A$, we call a linear map $\tau:A \to \mathbb{K}$ a \emph{$\phi$-trace (or trace)} if $\tau\AKMSpara{\phi(x_1,\ldots,x_n)}=0$ for $x_1,\ldots,x_n\in A$.
\end{definition}

\begin{lemma}
  Let $\phi:A^n\to A$ be a skew-symmetric $n$-linear map and $\tau$ a linear map $A\to\mathbb{K}$. Then $\phi_\tau$ is an $(n+1)$-linear skew-symmetric map. Furthermore, if $\tau$ is a $\phi$-trace then $\tau$ is a $\phi_\tau$-trace.
\end{lemma}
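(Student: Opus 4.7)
The plan is to verify the three assertions of the lemma in turn: $(n+1)$-linearity, skew-symmetry, and the trace property. Multilinearity is essentially a free observation from the defining formula
\[
\phi_\tau(x_1,\ldots,x_{n+1}) = \sum_{k=1}^{n+1}(-1)^k\tau(x_k)\phi(x_1,\ldots,\hat{x}_k,\ldots,x_{n+1}),
\]
since every summand is the product of $\tau(x_k)$ (linear in $x_k$) with a value of $\phi$ which is linear in each of the other arguments. So I would dispose of this in one sentence.

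The heart of the proof is skew-symmetry. In characteristic $0$ it suffices to show that $\phi_\tau$ is alternating, i.e.\ vanishes whenever two arguments coincide. Fix $i<j$ and assume $x_i=x_j$. In the defining sum, every summand with $k\notin\{i,j\}$ contains the argument list $(x_1,\ldots,\hat x_k,\ldots,x_{n+1})$ which still has two equal entries $x_i=x_j$, so the skew-symmetry of $\phi$ kills that summand. Only the summands $k=i$ and $k=j$ survive. Here the main bookkeeping issue, and the only real place a mistake could creep in, is the sign comparison between $\phi(x_1,\ldots,\hat x_i,\ldots,x_{n+1})$ and $\phi(x_1,\ldots,\hat x_j,\ldots,x_{n+1})$: removing $x_i$ leaves $x_j$ in position $j-1$, while removing $x_j$ leaves $x_i=x_j$ in position $i$, so passing from one list to the other amounts to sliding one entry across $j-i-1$ positions, producing a factor $(-1)^{j-i-1}$ by skew-symmetry of $\phi$. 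Combined with the prefactors $(-1)^i$ and $(-1)^j$ and with $\tau(x_i)=\tau(x_j)$, the two remaining terms collapse to
\[
\tau(x_i)\,\phi(x_1,\ldots,\hat x_j,\ldots,x_{n+1})\,\bigl[(-1)^{i+(j-i-1)}+(-1)^j\bigr]=0.
\]

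Finally, for the trace property, I would simply apply $\tau$ to the defining expression of $\phi_\tau(x_1,\ldots,x_{n+1})$ and pull the scalars $\tau(x_k)$ outside:
\[
\tau\bigl(\phi_\tau(x_1,\ldots,x_{n+1})\bigr)=\sum_{k=1}^{n+1}(-1)^k\tau(x_k)\,\tau\bigl(\phi(x_1,\ldots,\hat x_k,\ldots,x_{n+1})\bigr).
\]
Since $\tau$ is assumed to be a $\phi$-trace, each factor $\tau(\phi(\cdots))$ vanishes, so the whole sum is zero and $\tau$ is a $\phi_\tau$-trace. The only genuinely delicate step is the sign count in the alternating argument; everything else is formal.
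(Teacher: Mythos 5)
Your proof is correct. The paper itself states this lemma without proof (it is imported from the cited earlier work), so there is no argument to compare against; your verification is the standard one, and the sign bookkeeping in the alternating step --- the factor $(-1)^{j-i-1}$ from sliding the repeated entry across $j-i-1$ positions, which combines with the prefactors to give $(-1)^{j-1}+(-1)^{j}=0$ --- is carried out correctly, as is the use of characteristic $0$ to reduce skew-symmetry to the alternating property.
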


\begin{theorem} \cite{ams:n}
Let $(A,\phi)$ be an $n$-Lie algebra and $\tau$ a $\phi$-trace, then $(A,\phi_\tau)$ is an $(n+1)$-Lie algebra. We shall say that $(A,\phi_\tau)$ is induced by $(A,\phi)$. We refer to $A$ when considering the given $n$-Lie algebra and $A_\tau$ when considering the induced $(n+1)$-Lie algebra.
\end{theorem}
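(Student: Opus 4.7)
The plan is to verify the fundamental (Filippov) identity for $\phi_\tau$, since skew-symmetry of $\phi_\tau$ and the fact that $\tau$ is a $\phi_\tau$-trace are already delivered by the preceding lemma. So I would fix $x_1,\ldots,x_n,y_1,\ldots,y_{n+1}\in A$ and compare
\[
\phi_\tau\bigl(x_1,\ldots,x_n,\phi_\tau(y_1,\ldots,y_{n+1})\bigr) \quad \text{and} \quad \sum_{i=1}^{n+1}\phi_\tau\bigl(y_1,\ldots,\phi_\tau(x_1,\ldots,x_n,y_i),\ldots,y_{n+1}\bigr)
\]
by expanding every occurrence of $\phi_\tau$ via its definition.

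The first simplifications I would use are vanishing identities for $\tau$. On the left-hand side, the outermost expansion produces a term proportional to $\tau\bigl(\phi_\tau(y_1,\ldots,y_{n+1})\bigr)$, which is zero because $\tau$ is a $\phi_\tau$-trace. On the right-hand side, for each $i$ the outer $\phi_\tau$ yields a term proportional to $\tau\bigl(\phi_\tau(x_1,\ldots,x_n,y_i)\bigr)$ which vanishes for the same reason; and after one further expansion, terms of the shape $\tau\bigl(\phi(\ldots)\bigr)$ arise and vanish because $\tau$ is a $\phi$-trace. After these cancellations, both sides become finite sums whose only scalar coefficients are products $\tau(x_k)\tau(y_l)$ times signs $(-1)^{k+l}$ (or $(-1)^{k+l+i}$ on the RHS), multiplying expressions built purely from $\phi$.

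Next I would apply the Filippov identity for $\phi$ (the hypothesis that $(A,\phi)$ is an $n$-Lie algebra) inside each surviving LHS term: for fixed $k,l$,
\[
\phi\bigl(x_1,\ldots,\widehat{x_k},\ldots,x_n,\phi(y_1,\ldots,\widehat{y_l},\ldots,y_{n+1})\bigr) = \sum_{\substack{j=1\\ j\neq l}}^{n+1}\phi\bigl(y_1,\ldots,\phi(x_1,\ldots,\widehat{x_k},\ldots,x_n,y_j),\ldots,\widehat{y_l},\ldots,y_{n+1}\bigr).
\]
I would then reorganise the RHS by splitting its surviving terms into two groups according to whether the index $l$ used by the outer $\phi_\tau$-expansion coincides with the index $i$ (the argument being bracketed with $x_1,\ldots,x_n$) or not: the $l\neq i$ terms should match, after a careful sign reconciliation, the Filippov-expanded LHS above.

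The main obstacle is precisely this sign and index bookkeeping: one must verify that $(-1)^{k+l}$ (from the LHS) equals $(-1)^{k+l'+i'}$ (from the RHS after reindexing) once the omitted slot is shifted, accounting for the skew-symmetry of $\phi$ needed to move the bracketed entry into its canonical slot. A secondary task is to show that the exceptional RHS terms where $l=i$ (in which the inner $\tau(y_l)$ coincides with the bracketed argument) cancel among themselves; this cancellation follows from the skew-symmetry of $\phi$ together with the relabelling $i\leftrightarrow l$. Once both bookkeeping steps are carried out, the identity holds and $(A,\phi_\tau)$ is an $(n+1)$-Lie algebra.
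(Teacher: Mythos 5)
The paper gives no proof of this theorem --- it is quoted from \cite{ams:n} --- so there is nothing internal to compare against; your plan is the standard direct verification used in that reference, and it is correct. I checked the two bookkeeping claims you defer: after the outer $\tau(\phi_\tau(\cdots))$ and $\tau(\phi_\tau(x_1,\ldots,x_n,y_i))$ terms are killed by the trace property, the triple sum $\sum_{k}\sum_{l}\sum_{j\neq l}(-1)^{k+l}\tau(x_k)\tau(y_l)\,\phi(y_1,\ldots,\phi(x_1,\ldots,\widehat{x_k},\ldots,x_n,y_j),\ldots,\widehat{y_l},\ldots,y_{n+1})$ obtained from the Filippov-expanded left-hand side literally coincides with the main part of the right-hand side, and the exceptional right-hand-side terms $(-1)^{l+n+1}\tau(y_i)\tau(y_l)\,\phi(\ldots,\widehat{y_l},\ldots,\phi(x_1,\ldots,x_n)\text{ in slot }i,\ldots)$ do cancel in the pairs $(i,l)\leftrightarrow(l,i)$, since moving $\phi(x_1,\ldots,x_n)$ from slot $i$ to slot $l$ costs $(-1)^{l-1-i}$ by skew-symmetry, which combines with $(-1)^{i}$ versus $(-1)^{l}$ to give an exact sign flip.
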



\section{Structure of $(n+1)$-Lie Algebras induced by $n$-Lie Algebras}\label{AKMS:Structure}
In this section, we discuss some  structure properties  of $(n+1)$-Lie algebras induced by $n$-Lie algebras (subalgebras, ideals, solvability and nilpotency). They generalize to $n$-ary case the results obtained in \cite{akms:ternary}  and independently in \cite{Bai:rlz3} for ternary algebras.

Let $(A,\AKMSbracket{\cdot,...,\cdot})$ be an $n$-Lie algebra, $\tau$ a trace and $(A,\AKMSbracket{\cdot,...,\cdot}_\tau)$ the induced $(n+1)$-Lie algebra.
\begin{proposition}
If $B$ is a subalgebra of $A$ then $B$ is also a subalgebra of $A_\tau$.
\end{proposition}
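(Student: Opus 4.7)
The plan is to simply unpack the definition of $\AKMSbracket{\cdot,\ldots,\cdot}_\tau$ and observe that the induced bracket is a $\mathbb{K}$-linear combination of original brackets, which already lie in $B$ by hypothesis.

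More precisely, let $x_1,\ldots,x_{n+1}\in B$. I would start by writing out
\[
\AKMSbracket{x_1,\ldots,x_{n+1}}_\tau = \sum_{k=1}^{n+1}(-1)^k\tau(x_k)\AKMSbracket{x_1,\ldots,\hat{x}_k,\ldots,x_{n+1}}.
\]
For each $k$, the bracket $\AKMSbracket{x_1,\ldots,\hat{x}_k,\ldots,x_{n+1}}$ is the original $n$-ary bracket evaluated on $n$ elements of $B$, so it belongs to $B$ since $B$ is a subalgebra of $(A,\AKMSbracket{\cdot,\ldots,\cdot})$. The scalar $(-1)^k\tau(x_k)\in\mathbb{K}$ then just rescales this element, and summing $n+1$ elements of $B$ stays in $B$ because $B$ is a linear subspace. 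Hence $\AKMSbracket{x_1,\ldots,x_{n+1}}_\tau\in B$, which is exactly the statement that $B$ is closed under the induced $(n+1)$-ary bracket.

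There is essentially no obstacle here: the result is a direct consequence of the explicit formula defining $\phi_\tau$, and no use of the $\phi$-trace hypothesis or of the Filippov identity is needed. The slight subtlety, worth pointing out in the write-up, is that we do \emph{not} need $\tau$ to restrict to a trace on $B$ (or even to be nonzero on $B$), because the construction of $\phi_\tau$ only requires scalar evaluation of $\tau$ on individual elements; the $\phi$-trace property only intervenes when one wants $(A,\phi_\tau)$ to satisfy the fundamental identity, which is already granted by the ambient hypothesis that $A_\tau$ is an $(n+1)$-Lie algebra.
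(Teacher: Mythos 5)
Your proof is correct and is essentially the same as the paper's: both simply expand $\AKMSbracket{x_1,\ldots,x_{n+1}}_\tau$ by its defining formula and observe it is a linear combination of $n$-ary brackets of elements of $B$, hence lies in $B$. The sign convention ($(-1)^k$ versus $(-1)^{i-1}$) is immaterial here, and your remark that neither the trace property nor the Filippov identity is needed is a fair observation.
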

\begin{proof}
Let $B$ be a subalgebra of $(A,\AKMSbracket{\cdot,...,\cdot})$ and $x_1,...,x_{n+1} \in B$:
\[ \AKMSbracket{x_1,...,x_{n+1}}_\tau = \sum_{i=1}^{n+1} (-1)^{i-1}\tau(x_i)\AKMSbracket{x_1,...,\hat{x}_{i},...,x_{n+1}},\]
which is a linear combination of elements of $B$ and then belongs to $B$.
\end{proof}

\begin{proposition}
Let $J$ be an ideal of  $A$. Then $J$ is an ideal of $A_\tau$ if and only if
\[  \AKMSbracket{A,...,A} \subseteq J \text{ or } J \subseteq \ker \tau. \]
\end{proposition}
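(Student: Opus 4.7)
The plan is to expand the $(n+1)$-bracket $[j,x_1,\ldots,x_n]_\tau$ for arbitrary $j\in J$ and $x_1,\ldots,x_n\in A$, and isolate the single term that is not automatically in $J$. Applying the definition of $\phi_\tau$ with $j$ placed in the first slot gives
\[ [j,x_1,\ldots,x_n]_\tau = -\tau(j)\,[x_1,\ldots,x_n] + \sum_{k=1}^{n} (-1)^{k+1}\tau(x_k)\,[j,x_1,\ldots,\hat{x}_k,\ldots,x_n]. \]
Because $J$ is an ideal of the underlying $n$-Lie algebra $A$, each summand of the sum on the right, being an $n$-bracket that contains $j\in J$, already lies in $J$. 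Therefore the membership $[j,x_1,\ldots,x_n]_\tau\in J$ is equivalent to the membership $\tau(j)\,[x_1,\ldots,x_n]\in J$.

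With this reduction in hand, the sufficiency direction is immediate. If $[A,\ldots,A]\subseteq J$ then the bracket $[x_1,\ldots,x_n]$ already lies in $J$, and if $J\subseteq\ker\tau$ then $\tau(j)=0$. In either case the remaining obstruction vanishes, so $J$ is an ideal of $A_\tau$.

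For the converse I argue by contrapositive. Suppose $J\not\subseteq\ker\tau$ and $[A,\ldots,A]\not\subseteq J$, and choose $j_0\in J$ with $\tau(j_0)\neq 0$ together with $x_1,\ldots,x_n\in A$ satisfying $[x_1,\ldots,x_n]\notin J$. Since $J$ is a linear subspace and $\tau(j_0)$ is a nonzero scalar, the element $\tau(j_0)\,[x_1,\ldots,x_n]$ does not lie in $J$; the displayed identity then forces $[j_0,x_1,\ldots,x_n]_\tau\notin J$, contradicting the assumption that $J$ is an ideal of $A_\tau$.

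The main obstacle is essentially bookkeeping: one must correctly apply the sign convention of the formula defining $\phi_\tau$ after moving $j$ into the first position, and one must observe that the resulting $n$-brackets still contain $j$ and hence belong to $J$. Once the expansion above is obtained, the disjunction $[A,\ldots,A]\subseteq J$ or $J\subseteq\ker\tau$ falls out transparently from the elementary fact that a nonzero scalar multiple of an element outside a subspace remains outside it.
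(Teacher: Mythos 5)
Your proof is correct and follows essentially the same route as the paper: expand the induced bracket with $j\in J$, observe that every term containing $j$ inside an $n$-bracket lies in $J$ by the ideal property, and reduce the question to whether $\tau(j)\,[x_1,\ldots,x_n]\in J$. Your contrapositive argument for the converse just makes explicit the quantifier step that the paper leaves implicit.
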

\begin{proof}
Let $J$ be an ideal of $A$, and let $j \in J$ and $x_1,...,x_n \in A$, then we have:
\[ \AKMSbracket{x_1,...,x_n,j}_\tau = \sum_{i=1}^n (-1)^{i-1} \tau(x_i) \AKMSbracket{x_1,...,\hat{x}_i,...,x_n,j} + (-1)^n \tau(j) \AKMSbracket{x_1,...,x_n}. \]
We have $\sum_{i=1}^n (-1)^{i-1} \tau(x_i) \AKMSbracket{x_1,...,\hat{x}_i,...,x_n,j} \in J$, then, to obtain $\left. \AKMSbracket{x_1,...,x_n,j}_\tau \in J \right.$ it is necessary and sufficient to have $\tau(j) \AKMSbracket{x_1,...,x_n} \in J$, which is equivalent to $\tau(j) = 0$ or $\AKMSbracket{x_1,...,x_n} \in J$. 
\end{proof}

\begin{theorem} \label{AKMSsolv2}
Let $(A,\AKMSbracket{\cdot,...,\cdot})$ be an $n$-Lie algebra, $\tau$ a $\AKMSbracket{\cdot,...,\cdot}$-trace and $(A,\AKMSbracket{\cdot,...,\cdot}_\tau)$ the induced $(n+1)$-Lie algebra. The $(n+1)$-Lie algebra $(A,\AKMSbracket{\cdot,...,\cdot}_\tau)$ is solvable, more precisely $D^2(A_\tau) = 0$ i.e. $\left( D^1(A_\tau)=\AKMSbracket{A,...,A}_\tau,\AKMSbracket{\cdot,...,\cdot}_\tau\right)$ is abelian.
\end{theorem}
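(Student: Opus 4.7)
The plan is to exploit the defining formula of the induced bracket together with the trace condition on $\tau$. First, I would observe that any element of $D^1(A_\tau) = \AKMSbracket{A,\ldots,A}_\tau$ is a linear combination of terms of the form
\[
\AKMSbracket{x_1,\ldots,x_{n+1}}_\tau = \sum_{k=1}^{n+1}(-1)^k \tau(x_k)\, \AKMSbracket{x_1,\ldots,\hat{x}_k,\ldots,x_{n+1}},
\]
each summand of which is a scalar multiple of an element in the image of the original $n$-bracket. Hence $D^1(A_\tau) \subseteq \AKMSbracket{A,\ldots,A} = \operatorname{Im}\AKMSbracket{\cdot,\ldots,\cdot}$.

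Next, I would invoke the hypothesis that $\tau$ is a $\AKMSbracket{\cdot,\ldots,\cdot}$-trace, which by definition means $\tau$ vanishes on the image of the $n$-bracket. Combined with the previous inclusion, this yields $\tau(y)=0$ for every $y \in D^1(A_\tau)$.

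Finally, to show $D^2(A_\tau) = 0$, I would pick arbitrary $y_1,\ldots,y_{n+1} \in D^1(A_\tau)$ and compute
\[
\AKMSbracket{y_1,\ldots,y_{n+1}}_\tau = \sum_{k=1}^{n+1}(-1)^k \tau(y_k)\, \AKMSbracket{y_1,\ldots,\hat{y}_k,\ldots,y_{n+1}}.
\]
Since each coefficient $\tau(y_k)$ is zero by the preceding step, every term vanishes, and so the bracket is identically zero on $D^1(A_\tau)$. This both proves solvability (with nilpotency index $2$ in the derived series) and gives the stronger statement that $D^1(A_\tau)$ is abelian under $\AKMSbracket{\cdot,\ldots,\cdot}_\tau$. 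There is no real obstacle here: the argument reduces to the two-line combination of the inclusion $D^1(A_\tau) \subseteq \operatorname{Im}\AKMSbracket{\cdot,\ldots,\cdot}$ and the trace condition $\tau|_{\operatorname{Im}\AKMSbracket{\cdot,\ldots,\cdot}} = 0$.
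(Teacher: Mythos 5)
Your proposal is correct and follows essentially the same route as the paper: both arguments reduce to the observation that $\tau$ vanishes on $D^1(A_\tau)$, so every coefficient $\tau(y_k)$ in the induced bracket of elements of $D^1(A_\tau)$ is zero. The only cosmetic difference is that the paper invokes the earlier lemma stating that $\tau$ is a $\AKMSbracket{\cdot,\ldots,\cdot}_\tau$-trace directly, whereas you re-derive this via the inclusion $D^1(A_\tau)\subseteq D^1(A)$ and the original trace condition.
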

\begin{proof}
Let $x_1,...,x_{n+1}\in \AKMSbracket{A,...,A}_\tau$, $x_i=\AKMSbracket{x_i^1,...,x_i^{n+1}}_\tau$,  $\forall 1 \leq i \leq n+1$ , then:
\begin{align*}
&\AKMSbracket{x_1,...,x_{n+1}}_\tau \\ &= \sum_{i=1}^{n+1} \tau\AKMSpara{\AKMSbracket{x_i^1,...,x_i^{n+1}}_\tau}\AKMSbracket{\AKMSbracket{x_1^1,...,x_1^{n+1}}_\tau,...,\widehat{\AKMSbracket{x_i^1,...,x_i^{n+1}}_\tau},...\AKMSbracket{x_{n+1}^1,...,x_{n+1}^{n+1}}_\tau}\\ 
 &= 0, 
\end{align*}
because $\tau\left([\cdot, ... ,\cdot]_\tau \right)=0$.

\end{proof}

\begin{remark}[\cite{Filippov:nLie}]
Let $(A,\AKMSbracket{\cdot,...,\cdot})$ be an $n$-Lie algebra. If we fix $a \in A$, the bracket $ \AKMSbracket{\cdot,...,\cdot}_a = \AKMSbracket{a,...,\cdot} $ is skew-symmetric and satisfies the fundamental identity. Indeed, we have, for \\$x_1,...,x_{n-2},y_1,...,y_{n-1} \in A$:
\begin{align*}
\AKMSbracket{x_1,...,x_{n-2},\AKMSbracket{y_1,...,y_{n-1}}_a}_a &= \AKMSbracket{a,x_1,...,x_{n-2},\AKMSbracket{a,y_1,...,y_{n-1}}}\\
&= \AKMSbracket{\AKMSbracket{a,x_1,...,x_{n-2},a},y_1,...,y_{n-1}} \\
&+ \sum_{i=1}^{n-1} \AKMSbracket{a,y_1,...,\AKMSbracket{a,x_1,...x_{n-2},y_i},...,y_{n-1}} \\
&= \sum_{i=1}^{n-1} \AKMSbracket{y_1,...,\AKMSbracket{x_1,...x_{n-2},y_i}_a,...,y_{n-1}}_a.
\end{align*}

\end{remark}

\begin{proposition}
Let $(A,\AKMSbracket{\cdot,...,\cdot})$ be an $n$-Lie algebra, $\tau$ a $\AKMSbracket{\cdot,...,\cdot}$-trace and $(A,\AKMSbracket{\cdot,...,\cdot}_\tau)$ the induced $(n+1)$-Lie algebra. Let $c \in Z(A)$, if $\tau(c) = 0$ then $c \in Z(A_\tau)$. Moreover, if $A$ is not abelian then $\tau(c) = 0$ if and only if $c \in Z(A_\tau)$.
\end{proposition}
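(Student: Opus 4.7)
The plan is to evaluate the induced bracket $[c, x_1, \ldots, x_n]_\tau$ directly for arbitrary $x_1, \ldots, x_n \in A$ and show that it collapses to a single term proportional to $\tau(c)$. Concretely, I would apply the definition of $\phi_\tau$ with $c$ placed in the first slot. This yields one summand of the form $-\tau(c)[x_1, \ldots, x_n]$ (coming from the index $k=1$) together with $n$ further summands, each of which contains an $n$-bracket of the form $[c, x_1, \ldots, \widehat{x}_j, \ldots, x_n]$ in the original $n$-Lie algebra $A$.

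The key observation is that since $c \in Z(A)$, every one of these latter brackets vanishes in $A$, no matter which $x_j$ is omitted. Hence the entire sum reduces to
\[
[c, x_1, \ldots, x_n]_\tau = -\tau(c)\,[x_1, \ldots, x_n].
\]
From this identity both directions of the proposition follow immediately. If $\tau(c) = 0$, the right-hand side vanishes for all $x_1, \ldots, x_n$, so $c \in Z(A_\tau)$. Conversely, if $c \in Z(A_\tau)$, then $\tau(c)\,[x_1, \ldots, x_n] = 0$ for all choices of $x_1, \ldots, x_n$; since $A$ is not abelian, there exist elements whose $n$-bracket is nonzero, forcing $\tau(c) = 0$.

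There is really no serious obstacle here; the argument is essentially the observation that centrality of $c$ in $A$ kills all but one term of the induced $(n+1)$-bracket expansion. The only point requiring mild care is the bookkeeping of signs and the placement of $c$ in the bracket (one could alternatively exploit skew-symmetry of $[\cdot, \ldots, \cdot]_\tau$ to place $c$ in the last slot, which would give exactly the formula used in the preceding proof of Theorem~\ref{AKMSsolv2}).
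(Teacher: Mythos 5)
Your proposal is correct and is essentially the paper's own argument: the paper expands $\AKMSbracket{x_1,\ldots,x_n,c}_\tau$ with $c$ in the last slot, observes that centrality of $c$ in $A$ kills every term except $(-1)^n\tau(c)\AKMSbracket{x_1,\ldots,x_n}$, and reads off both implications exactly as you do. The only difference is the cosmetic choice of slot (and hence the sign in front of $\tau(c)\AKMSbracket{x_1,\ldots,x_n}$), which is immaterial.
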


\begin{proof}
Let $c \in Z(A)$ and $x_1,...,x_n \in A$:
\begin{align*}
\AKMSbracket{x_1,...,x_n,c}_\tau &= \sum_{i=1}^n (-1)^{i-1} \tau(x_i)\AKMSbracket{x_1,...,\widehat{x_i},...,x_n,c} + (-1)^n \tau(c) \AKMSbracket{x_1,...,x_n}\\
&= (-1)^n \tau(c) \AKMSbracket{x_1,...,x_n}.
\end{align*}
If $\tau(c)=0$ then $c \in Z(A_\tau)$.\\ Conversely, if $c \in Z(A_\tau)$ and $A$ is not abelian, then $\tau(c)=0$. 
\end{proof}

\begin{proposition}
Let $(A,\AKMSbracket{\cdot,...,\cdot})$ be a non-abelian $n$-Lie algebra, $\tau$ a $\AKMSbracket{\cdot,...,\cdot}$-trace and $(A,\AKMSbracket{\cdot,...,\cdot}_\tau)$ the induced $(n+1)$-Lie algebra. If $\tau\AKMSpara{Z(A)} \neq \{0\}$ then $A_\tau$ is not abelian.
\end{proposition}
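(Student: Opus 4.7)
The plan is to exhibit an explicit bracket in $A_\tau$ that is nonzero. Since $\tau(Z(A)) \neq \{0\}$, I pick $c \in Z(A)$ with $\tau(c) \neq 0$. Since $A$ is not abelian, I pick $x_1, \ldots, x_n \in A$ with $\AKMSbracket{x_1, \ldots, x_n} \neq 0$. The candidate witness is $\AKMSbracket{x_1, \ldots, x_n, c}_\tau$.

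Next, I expand this bracket using the definition of $\AKMSbracket{\cdot,\ldots,\cdot}_\tau$, which is essentially the computation already carried out in the previous proposition. I obtain
\[
\AKMSbracket{x_1,\ldots,x_n,c}_\tau = \sum_{i=1}^n (-1)^{i-1}\tau(x_i)\AKMSbracket{x_1,\ldots,\widehat{x_i},\ldots,x_n,c} + (-1)^n \tau(c)\AKMSbracket{x_1,\ldots,x_n}.
\]
Because $c \in Z(A)$, every bracket $\AKMSbracket{x_1,\ldots,\widehat{x_i},\ldots,x_n,c}$ in the sum vanishes, so the expression collapses to $(-1)^n \tau(c)\AKMSbracket{x_1,\ldots,x_n}$, which is nonzero by the choices of $c$ and of the $x_i$'s. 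Hence $A_\tau$ admits a nonzero bracket and is not abelian.

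The only subtlety worth noting (though it is not strictly needed for the conclusion, since the formal computation suffices) is the consistency check that $c$ cannot lie in the span of $x_1,\ldots,x_n$: if it did, then writing $c = \sum \alpha_j x_j$ and using skew-symmetry in $\AKMSbracket{x_1,\ldots,\widehat{x_i},\ldots,x_n,c}=0$ together with $\AKMSbracket{x_1,\ldots,x_n}\neq 0$ would force each $\alpha_i = 0$, contradicting $\tau(c)\neq 0$. So this step presents no obstacle at all; the main point is simply the direct evaluation of the induced bracket on the pair $(x_1,\ldots,x_n,c)$.
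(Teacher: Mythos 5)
Your proof is correct and follows exactly the paper's argument: expand $\AKMSbracket{x_1,\ldots,x_n,c}_\tau$, use $c\in Z(A)$ to kill all terms of the sum except $(-1)^n\tau(c)\AKMSbracket{x_1,\ldots,x_n}$, and conclude from $\tau(c)\neq 0$ and $\AKMSbracket{x_1,\ldots,x_n}\neq 0$. The closing remark about $c$ not lying in the span of the $x_i$ is, as you note, superfluous.
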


\begin{proof}
Let $x_1,...,x_n \in A$ such that $\AKMSbracket{x_1,...,x_n} \neq 0$ and $c \in Z(A)$ such that $\tau(c) \neq 0$ then we have:
\begin{align*}
\AKMSbracket{x_1,...,x_n,c}_\tau &= \sum_{i=1}^n (-1)^{i-1} \tau(x_i)\AKMSbracket{x_1,...,\widehat{x_i},...,x_n,c} + (-1)^n \tau(c) \AKMSbracket{x_1,...,x_n}\\
&= (-1)^n \tau(c) \AKMSbracket{x_1,...,x_n} \neq 0,
\end{align*}
which means that $A_\tau$ is not abelian.
\end{proof}

\begin{proposition}
Let $\AKMSpara{A,\AKMSbracket{\cdot,...,\cdot}}$ be an $n$-Lie algebra, $\tau$ be a trace, $\AKMSpara{A,\AKMSbracket{\cdot,...,\cdot}_\tau}$ the induced algebra,  $\AKMSpara{C^p(A)}_p$ be the central descending series of $\AKMSpara{A,\AKMSbracket{\cdot,...,\cdot}}$, and $\AKMSpara{C^p(A_\tau)}_p$ be the central descending series of $\AKMSpara{A,\AKMSbracket{\cdot,...,\cdot}_\tau}$. Then we have 
\[ C^p(A_\tau) \subset C^p(A), \forall p \in \mathbb{N}. \]
If there exists $u \in A$ such that $\AKMSbracket{u,x_1,...,x_n}_\tau = \AKMSbracket{x_1,...,x_n}, \forall x_1,...,x_n \in A$ then:
\[ C^p(A_\tau) = C^p(A), \forall p \in \mathbb{N}. \]
\end{proposition}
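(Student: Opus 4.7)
The plan is to prove both inclusions by induction on $p$. For the first inclusion $C^p(A_\tau)\subset C^p(A)$, the base case $p=0$ is immediate since both sides equal $A$. For the inductive step I would take a generator $\AKMSbracket{x_1,x_2,\ldots,x_{n+1}}_\tau$ of $C^{p+1}(A_\tau)$ with $x_1\in C^p(A_\tau)$ and $x_2,\ldots,x_{n+1}\in A$, and expand it via the defining formula for $\phi_\tau$. The delicate term is the one in which $x_1$ itself is omitted, namely $-\tau(x_1)\AKMSbracket{x_2,\ldots,x_{n+1}}$: when $p\geq 1$, the inclusion $C^p(A_\tau)\subset C^1(A_\tau)=\AKMSbracket{A,\ldots,A}_\tau$, combined with the earlier lemma asserting that $\tau$ is a $\AKMSbracket{\cdot,\ldots,\cdot}_\tau$-trace, forces $\tau(x_1)=0$, so this term vanishes; the remaining $n$ terms are scalar multiples of brackets $\AKMSbracket{x_1,\ldots,\hat{x}_k,\ldots,x_{n+1}}$ that still contain $x_1\in C^p(A)$ (induction hypothesis), hence lie in $\AKMSbracket{C^p(A),A,\ldots,A}=C^{p+1}(A)$. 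When $p=0$ no trace-killing is needed, since every term of the expansion is already a scalar multiple of a bracket in $\AKMSbracket{A,\ldots,A}=C^1(A)$.

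For the second statement, assume $u\in A$ satisfies $\AKMSbracket{u,x_1,\ldots,x_n}_\tau=\AKMSbracket{x_1,\ldots,x_n}$ for all $x_i\in A$. I would prove the reverse inclusion $C^p(A)\subset C^p(A_\tau)$ by induction, the base case being trivial. For the step, given a generator $\AKMSbracket{z,y_1,\ldots,y_{n-1}}$ of $C^{p+1}(A)$ with $z\in C^p(A)$, the hypothesis rewrites it as $\AKMSbracket{u,z,y_1,\ldots,y_{n-1}}_\tau$; the induction hypothesis gives $z\in C^p(A_\tau)$, and by the skew-symmetry of $\AKMSbracket{\cdot,\ldots,\cdot}_\tau$ this element lies in $\AKMSbracket{C^p(A_\tau),A,\ldots,A}_\tau=C^{p+1}(A_\tau)$. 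Combined with the first inclusion, equality follows.

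The main obstacle is the $p\geq 1$ case of the first inclusion: one must notice that the otherwise problematic summand $-\tau(x_1)\AKMSbracket{x_2,\ldots,x_{n+1}}$, which a priori only sits in $C^1(A)$ rather than in $C^{p+1}(A)$, is automatically zero once one invokes the trace property of $\tau$ on the induced bracket. Everything else is a routine unpacking of definitions together with an appeal to skew-symmetry to place $z$ in the first slot of the induced bracket.
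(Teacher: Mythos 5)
Your proposal is correct and follows essentially the same route as the paper: induction on $p$, expansion of the induced bracket, and the key observation that the summand omitting the element of $C^p(A_\tau)$ vanishes because that element lies in $\AKMSbracket{A,\ldots,A}_\tau\subseteq\ker\tau$ (the paper records this only as a terse parenthetical ``$\tau(a)=0$'', which you justify more explicitly). The reverse inclusion under the hypothesis on $u$ is also handled exactly as in the paper, via $\AKMSbracket{z,y_1,\ldots,y_{n-1}}=\AKMSbracket{u,z,y_1,\ldots,y_{n-1}}_\tau$ and skew-symmetry.
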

\begin{proof}
We proceed by induction over $p \in \mathbb{N}$. The case of $p=0$ is trivial, for $p=1$ we have:
\[\forall x = \AKMSbracket{x_1,...,x_{n+1}}_\tau \in C^1(A_\tau),\  x=\sum_{i=1}^{n+1} (-1)^{i-1} \tau(x_i) \AKMSbracket{x_1,...,\hat{x}_i,...,x_{n+1}}, \] which is a linear combination of elements of $C^1(A)$ and then is an element of $C^1(A)$.
Suppose now that there exists $u \in A$ such that $\AKMSbracket{u,x_1,...,x_n}_\tau = \AKMSbracket{x_1,...,x_n}, \forall x_1,...,x_n \in A$. Then for $x = \AKMSbracket{x_1,...,x_n} \in C^1(A)$, $x=\AKMSbracket{u,x_1,...,x_n}_\tau$ and hence it  is an element of $C^1(A_\tau)$.

Now, we suppose this proposition  true for some $p \in \mathbb{N}$, and let $x \in C^{p+1}(A_\tau)$. Then $x=\AKMSbracket{a,x_1,...,x_n}_\tau$ with $x_1,...,x_n \in A$ and $a \in C^{p}(A_\tau)$,
\[x=\AKMSbracket{a,x_1,...,x_n}_\tau = \sum_{i=1}^{n} (-1)^{i} \tau(x_i) \AKMSbracket{a,x_1,...,\hat{x}_i,...,x_n}, \qquad (\tau(a)=0)\]
which is an element of $C^{p+1}(A)$ because $a \in C^{p}(A_\tau) \subset C^p(A)$.
Assume there exists $u \in A$ such that $\AKMSbracket{u,x_1,...,x_n}_\tau = \AKMSbracket{x_1,...,x_n}, \forall x_1,...,x_n \in A$. Then if $x \in C^{p+1}(A)$ we have $x = \AKMSbracket{a,x_1,...,x_{n-1}}$ with $a \in C^{p}(A)$ and $x_1,...,x_{n-1} \in A$. Therefore: \[ x = \AKMSbracket{a,x_1,...,x_{n-1}} = \AKMSbracket{u,a,x_1,...,x_{n-1}}_\tau = (-1)^n \AKMSbracket{a,x_1,...,x_{n-1},i}_\tau \in C^{p+1}(A_\tau).\]  
\end{proof}

\begin{remark} \label{AKMS-D3subD}
It also results from the preceding proposition that \[D^1(A_\tau) = \AKMSbracket{A,...,A}_\tau \subset D^1(A) = \AKMSbracket{A,...,A},\] and if there exists $u \in A$ such that \[\AKMSbracket{u,x_1,...,x_n}_\tau = \AKMSbracket{x_1,...,x_n}, \forall x_1,...,x_n \in A,\] then $D^1 (A_\tau) = D^1(A)$. For the rest of the derived series, we have obviously the first inclusion by Theorem \ref{AKMSsolv2}, which states also that all induced algebras are solvable.
\end{remark}

\begin{theorem}
Let $\AKMSpara{A,\AKMSbracket{\cdot ,..., \cdot}}$ be an $n$-Lie algebra, $\tau$ be a trace and $\AKMSpara{A,\AKMSbracket{\cdot ,..., \cdot}_\tau}$ the induced $(n+1)$-Lie algebra. Then,
if $\AKMSpara{A,\AKMSbracket{\cdot ,..., \cdot}}$ is nilpotent of class  $p$, we have $\AKMSpara{A,\AKMSbracket{\cdot ,..., \cdot}_\tau}$ is nilpotent of class at most  $p$.
Moreover, if there exists $u \in A$ such that $\AKMSbracket{u,x_1,...,x_n}_\tau = \AKMSbracket{x_1,...,x_n}, \forall x_1,...,x_n \in A$, then $\AKMSpara{A,\AKMSbracket{\cdot ,..., \cdot}}$ is nilpotent of class  $p$ if and only if $\AKMSpara{A,\AKMSbracket{\cdot ,..., \cdot}_\tau}$ is nilpotent of class  $p$.
\end{theorem}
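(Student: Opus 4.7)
The plan is to deduce this theorem as an immediate corollary of the preceding proposition, which already established the relationship $C^p(A_\tau) \subset C^p(A)$ for all $p \in \mathbb{N}$, with equality whenever the element $u$ described in the statement exists. Recall that the nilpotency class of an $n$-ary algebra is, by definition, the smallest $p$ for which the $p$-th term of the central descending series vanishes.

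For the first assertion, I would argue as follows: assume $A$ is nilpotent of class $p$, so $C^p(A) = \{0\}$. The inclusion $C^p(A_\tau) \subset C^p(A) = \{0\}$ from the previous proposition immediately forces $C^p(A_\tau) = \{0\}$, so $A_\tau$ is nilpotent of class at most $p$. The bound cannot in general be sharpened, since dropping some brackets when passing from $A$ to $A_\tau$ (one then takes $n+1$ arguments at a time and inserts the trace) may cause the descending series to reach zero faster.

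For the "moreover" part, under the existence of $u \in A$ with $\AKMSbracket{u,x_1,\ldots,x_n}_\tau = \AKMSbracket{x_1,\ldots,x_n}$ for all $x_1,\ldots,x_n \in A$, the preceding proposition gives $C^p(A_\tau) = C^p(A)$ for every $p$. Consequently $C^p(A) = \{0\}$ if and only if $C^p(A_\tau) = \{0\}$, and likewise $C^{p-1}(A) \neq \{0\}$ if and only if $C^{p-1}(A_\tau) \neq \{0\}$. Taking the least such $p$ in either case yields the same integer, so the two nilpotency classes coincide, which is exactly the claimed equivalence.

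There is essentially no serious obstacle here: all the structural work was already performed in the proof of the preceding proposition (in particular the use of the trace property $\tau([\cdot,\ldots,\cdot]_\tau) = 0$ and the inductive argument on $p$). The present theorem is the direct translation of that inclusion/equality of the central descending series into the language of nilpotency class.
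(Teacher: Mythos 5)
Your argument is correct and is essentially identical to the paper's own proof: both parts are obtained by translating the inclusion $C^p(A_\tau)\subset C^p(A)$ (and the equality $C^p(A_\tau)=C^p(A)$ when the element $u$ exists) from the preceding proposition into statements about the least $p$ with $C^p=\{0\}$. No gaps.
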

\begin{proof} 
\begin{enumerate}
\item Suppose that $\AKMSpara{A,\AKMSbracket{\cdot ,..., \cdot}}$ is nilpotent of class $p\in \mathbb{N}$, then $C^p(A)=\{0\}$. By the preceding proposition, $C^p (A_\tau) \subseteq C^p(A)=\{0\}$, therefore $\AKMSpara{A,\AKMSbracket{\cdot ,..., \cdot}_\tau}$ is nilpotent of class at most $p$.
\item We suppose now that $\AKMSpara{A,\AKMSbracket{\cdot ,..., \cdot}_\tau}$ is nilpotent of class $p \in \mathbb{N}$, and that there exists $u \in A$ such that $\AKMSbracket{u,x_1,...,x_n}_\tau = \AKMSbracket{x_1,...,x_n}, \forall x_1,...,x_n \in A$, then $C^p(A_\tau)=\{0\}$. By the preceding proposition, $C^p(A) = C^p(A_\tau)=\{0\}$. Therefore $\AKMSpara{A,\AKMSbracket{\cdot ,..., \cdot}}$ is nilpotent, since $C^{p-1}(A) = C^{p-1}(A_\tau) \neq \{0\}$,  $\AKMSpara{A,\AKMSbracket{\cdot ,..., \cdot}_\tau}$ and $\AKMSpara{A,\AKMSbracket{\cdot ,..., \cdot}}$ have the same nilpotency class.
\end{enumerate}
\end{proof}


\section{Central Extension of $(n+1)$-Lie Algebras induced by $n$-Lie Algebras}\label{AKMS:Extension}
In this section, we study central extensions of $(n+1)$-Lie algebras induced by $n$-Lie algebras. We racall first some basics.
\begin{definition}
Let $A,B,C$ be three $n$-Lie algebras ($n\geq 2$). An extension of $B$ by $A$ is a short sequence: 
\[ A \overset{\lambda}{\to} C \overset{\mu}{\to} B, \]
such that $\lambda$ is an injective homomorphism, $\mu$ is a surjective homomorphism, and\\ $\operatorname{Im} \lambda \subset \ker \mu$. We say also that $C$ is an extension of $B$ by $A$.
\end{definition}
\begin{definition}
Let $A$, $B$ be two $n$-Lie algebras, and  $A \overset{\lambda}{\to} C \overset{\mu}{\to} B$ be an extension of $B$ by $A$.
\begin{itemize}
\item The extension is said to be trivial if there exists an ideal $I$ of $C$ such that\\ $C = \ker \mu \oplus I$.
\item It is said to be central if $\ker \mu \subset Z (C)$.
\end{itemize}
\end{definition}
We may equivalently define central extensions by a $1$-dimensional algebra (we will simply call it central extension) this way:
\begin{definition}
Let $A$ be an $n$-Lie algebra. We call central extension of $A$ the space $\bar{A}=A\oplus \mathbb{K} c$ equipped with the bracket:
\[ \AKMSbracket{x_1,...,x_n}_c = \AKMSbracket{x_1,...,x_n} + \omega\AKMSpara{x_1,...,x_n} c \text{ and } \AKMSbracket{x_1,...,x_{n-1},c}_c = 0,\forall x_1,...,x_n \in A, \] 
where $\omega$ is a skew-symmetric $n$-linear form such that $\AKMSbracket{\cdot,...,\cdot}_c$ satisfies the Nambu identity (or Jacobi identity for $n=2$). 
\end{definition}

\begin{proposition}[\cite{aip:review}]
\begin{enumerate}
\item The bracket of a central extension satisfies the fundamental identity (resp. Jacobi identity) if and only if $\omega$ is a $2$-cocycle for the scalar cohomology of $n$-Lie algebras (resp. Lie algebras).
\item Two central extensions of an $n$-Lie algebra (resp. Lie algebra) $A$ given by two maps $\omega_1$ and $\omega_2$ are isomorphic if and only if $\omega_2 - \omega_1$ is a $2$-coboundary for the scalar cohomology of $n$-Lie algebras (resp. Lie algebras).
\end{enumerate}
\end{proposition}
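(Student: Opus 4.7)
For part (1), the plan is to unwind the fundamental identity directly on $\bar{A}$. Since $c$ is central in $\bar{A}$ by the definition $\AKMSbracket{x_1,\ldots,x_{n-1},c}_c = 0$, the identity is trivially satisfied whenever any argument equals $c$, so I may assume $x_1,\ldots,x_{n-1},y_1,\ldots,y_n \in A$. Plugging in the definition of $\AKMSbracket{\cdot,...,\cdot}_c$ and expanding in the decomposition $\bar{A} = A \oplus \mathbb{K}c$, the $A$-component of the resulting identity reduces to the fundamental identity for $(A,\AKMSbracket{\cdot,...,\cdot})$, which holds by hypothesis, while the $\mathbb{K}c$-component is exactly
\[ \omega\AKMSpara{x_1,\ldots,x_{n-1},\AKMSbracket{y_1,\ldots,y_n}} = \sum_{i=1}^n \omega\AKMSpara{y_1,\ldots,\AKMSbracket{x_1,\ldots,x_{n-1},y_i},\ldots,y_n}. \]
After rewriting $(x_1,\ldots,x_{n-1})$ and $(y_1,\ldots,y_{n-1})$ as fundamental objects acting on $y_n$, this coincides with $d^2\omega = 0$ read off from the scalar coboundary formula recalled in Section~\ref{AKMS:n-Lie}. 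The converse direction needs no separate argument, as each step is an equivalence.

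For part (2), the plan is to identify isomorphisms of central extensions with their underlying linear data. By definition, an isomorphism $\phi:\bar{A}_{\omega_1}\to \bar{A}_{\omega_2}$ of central extensions is a linear isomorphism fixing the central line $\mathbb{K}c$ and inducing the identity on the quotient $A$; any such map is necessarily of the form $\phi(x) = x + f(x)c$ for $x\in A$ and $\phi(c)=c$, for a unique linear $f:A\to\mathbb{K}$. Imposing
\[ \phi\AKMSpara{\AKMSbracket{x_1,\ldots,x_n}_{\omega_1}} = \AKMSbracket{\phi(x_1),\ldots,\phi(x_n)}_{\omega_2} \]
and using centrality of $c$ in $\bar{A}_{\omega_2}$ to kill every multilinear term containing at least one factor $f(x_i)c$, the equation collapses to the single scalar relation
\[ (\omega_2-\omega_1)(x_1,\ldots,x_n) = f\AKMSpara{\AKMSbracket{x_1,\ldots,x_n}}. \]
Comparing this with the explicit formula for $d^1$ on the $1$-cochain $f$ in the scalar complex yields $\omega_2-\omega_1 = d^1(-f)$, i.e.\ $\omega_2-\omega_1$ is a $2$-coboundary. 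The converse is immediate: given such an $f$, defining $\phi$ by the same formula and running the computation backward shows that $\phi$ is an extension isomorphism.

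The main obstacle I anticipate is not conceptual but rather bookkeeping: one must carefully match the skew-symmetric tuple $(x_1,\ldots,x_n)$ appearing in $\omega$ against the fundamental-object-plus-element presentation $(X,z)$ used in the coboundary formulas of Section~\ref{AKMS:n-Lie}, and keep the sign conventions consistent so that the reductions $d^2\omega=0$ and $\omega_2-\omega_1=d^1 g$ appear cleanly. The Lie algebra case $n=2$ is covered without modification, since in that case the fundamental identity becomes the Jacobi identity and the scalar $d^1,d^2$ above specialize to the usual Chevalley--Eilenberg differentials.
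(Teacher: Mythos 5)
Your proposal is correct and follows essentially the same route as the paper: expand the bracket on $\bar{A}=A\oplus\mathbb{K}c$, note that the $A$-component of the fundamental identity holds automatically while the $\mathbb{K}c$-component is exactly $d^2\omega(X,Y,z)=0$, and for part (2) compare an isomorphism of the form $x\mapsto x+f(x)c$ with the relation $\omega_2-\omega_1=\pm\, d^1 f$. The only notable difference is one of emphasis: you spell out the ``isomorphic $\Rightarrow$ coboundary'' direction of (2) (under the standard convention that the isomorphism fixes $c$ and induces the identity on $A$), which the paper leaves implicit, and treat as immediate the converse direction that the paper actually computes in detail.
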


\begin{proof}
\begin{enumerate}
\item Let $(A,[\cdot,...,\cdot])$ be an $n$-Lie algebra, let $\omega$ be a skew-symmetric $n$-linear form on $A$. Define on $\bar{A} = A \oplus \mathbb{K} c$ the bracket $[\cdot,...,\cdot]_c$ by:
\[ \AKMSbracket{x_1,...,x_n}_c = \AKMSbracket{x_1,...,x_n} + \omega(x_1,...,x_n) c, \forall x_1,...,x_n \in A \]
and \[\AKMSbracket{x_1,...,x_{n-1},c}_c = 0, \forall x_1,...,x_{n-1} \in A\]

\begin{align*}
X\cdot_c (Y \cdot_c z) &- (X \cdot_c Y) \cdot_c z - Y \cdot_c (X \cdot_c z) = X \cdot_c (Y \cdot z + \omega(Y,z)c) \\
&- \AKMSpara{X \cdot Y + \sum_{i=1}^{n-1} (y_1,...,y_{i-1},\omega(X,y_i)c,...,y_{n-1})}\cdot_c z\\
&- Y \cdot_c (X \cdot z + \omega(X,z)c)\\
&= X\cdot_c (Y \cdot z) - (X \cdot Y) \cdot_c z - Y \cdot_c (X \cdot z)\\
&= X\cdot (Y \cdot z) + \omega(X,Y \cdot z) c\\
&- (X \cdot Y) \cdot z - \omega(X \cdot Y, z) c \\
&- Y \cdot_c (X \cdot z) - \omega(Y, X \cdot z) c\\
&= X\cdot (Y \cdot z)  - (X \cdot Y) \cdot z - Y \cdot_c (X \cdot z) \\
&+ \omega(X,Y \cdot z) c - \omega(X \cdot Y, z) c - \omega(Y, X \cdot z) c\\
&= d^2\omega(X,Y,z) c.
\end{align*}
That is, fundamental identity is satisfied if and only if $\omega$ is a $2$-cocycle for the scalar cohomology of $A$.
\item Let $\omega_1, \omega_2 \in Z^2(A,\mathbb{K})$ such that $\omega_2 - \omega_1 = \alpha(\AKMSbracket{\cdot,...,\cdot})$, with $\alpha \in C^1(A,\mathbb{K})$. Let $(\bar{A},[\cdot,...,\cdot]_{\omega_1})$ and $(\bar{A},[\cdot,...,\cdot]_{\omega_2})$ be two central extensions of $A$ defined by $\omega_1$ and $\omega_2$ respectively. Consider \[f : (\bar{A},[\cdot,...,\cdot]_{\omega_1}) \to (\bar{A},[\cdot,...,\cdot]_{\omega_2})\] defined by: \[f(x) = x + \alpha(P_A(x)) c,\] where $P_A$ is the projection of range $A$. We have:
\begin{align*}
f(\AKMSbracket{x_1,...,x_n}_{\omega_1}) &= \AKMSbracket{x_1,...,x_n} + \omega_1({x_1,...,x_n}) c + \alpha(\AKMSbracket{x_1,...,x_n}) c\\
&= \AKMSbracket{x_1,...,x_n} + \omega_2({x_1,...,x_n}) c\\
&= \AKMSbracket{x_1,...,x_n}_{\omega_2}\\
&= \AKMSbracket{x_1+\alpha(P_A(x_1))c,...,x_n + \alpha(P_A(x_n))c}_{\omega_2}\\
&= \AKMSbracket{f(x_1),...,f(x_n)}_{\omega_2},
\end{align*}
that means $f$ is an $n$-Lie algebras homomorphism. Let's prove now that it is an isomorphism:
\begin{align*}
\ker(f)&=\{ x \in \bar{A} : f(x)=0 \}\\
&= \{ x \in \bar{A} : x+\alpha(P_A(x))c=0 \}\\
&= \{ x \in \bar{A} : P_A(x)+(x_c+\alpha(P_A(x)))c=0 \} (x = P_A(x) + x_c c)\\
&= \{ x \in \bar{A} : P_A(x)=0 \text{ and } x_c+\alpha(P_A(x))=0 \} =\{0\},
\end{align*}
which means that $f$ is injective. Therefore one concludes, when $A$ is finite dimensional, that it is bijective. We prove now that $f$ is surjective, so the result holds in infinite dimensional case:
\begin{align*}
\operatorname{Im}(f) &= \{ f(x) : x \in \bar{A} \}\\
&= \{ x + \alpha(P_A(x))c : x \in \bar{A} \}\\
&= \{ P_A(x) + (x_c +\alpha(P_A(x))) c : x = P_A(x)+x_c c \in \bar{A} \} = \bar{A},
\end{align*}
which means that $f$ is an $n$-Lie algebras isomorphism. 
\end{enumerate}
\end{proof}

Now, we look at the question of whether a central extension of an $n$-Lie algebra may give rise to a central extension of the induced $(n+1)$-Lie algebra (by some trace $\tau$), the answer is given in the following theorem:
\begin{theorem}
Let $(A,\AKMSbracket{\cdot,...,\cdot})$ be an $n$-Lie algebra, $\tau$ be a trace and $\AKMSpara{A,\AKMSbracket{\cdot,...,\cdot}_\tau}$ be the induced $(n+1)$-Lie algebra. Let $\AKMSpara{\bar{A},\AKMSbracket{\cdot,...,\cdot}_c}$ be a central extension of $(A,\AKMSbracket{\cdot,...,\cdot})$, where 
\[\bar{A}=A\oplus \mathbb{K} c \text{ and } \AKMSbracket{x_1,...,x_n}_c = \AKMSbracket{x_1,...,x_n} + \omega\AKMSpara{x_1,...,x_n}c, \] 
and assume that $\tau$ extends  to $\bar{A}$ by $\tau(c)=0$. Then the $(n+1)$-Lie algebra $\AKMSpara{\bar{A},\AKMSbracket{\cdot,...,\cdot}_{c,\tau}}$ induced by $\AKMSpara{\bar{A},\AKMSbracket{\cdot,...,\cdot}_c}$, is a central extension of $(A,\AKMSbracket{\cdot,...,\cdot}_\tau)$, where 
\[\AKMSbracket{x_1,...,x_n}_{c,\tau} = \AKMSbracket{x_1,...,x_{n+1}}_\tau + \omega_\tau \AKMSpara{x_1,...,x_{n+1}}c\]  with \[\omega_\tau \AKMSpara{x_1,...,x_{n+1}} = \sum_{i=1}^{n+1} (-1)^{i-1} \tau\AKMSpara{x_i} \omega(x_1,...,\hat{x}_i,...,x_{n+1}). \]
\end{theorem}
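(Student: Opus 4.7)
The plan is to verify the stated formula by a direct expansion of the induced bracket and then read off centrality. First I would check that $\tau$, extended by $\tau(c)=0$, remains a trace on $(\bar{A},[\cdot,\ldots,\cdot]_c)$: for $x_1,\ldots,x_n\in A$ one has $\tau([x_1,\ldots,x_n]_c)=\tau([x_1,\ldots,x_n])+\omega(x_1,\ldots,x_n)\tau(c)=0$, using that $\tau$ is a $[\cdot,\ldots,\cdot]$-trace on $A$ and that $\tau(c)=0$, while brackets of the form $[x_1,\ldots,x_{n-1},c]_c$ vanish outright. By the construction theorem recalled in the previous section, $(\bar{A},[\cdot,\ldots,\cdot]_{c,\tau})$ is therefore an $(n+1)$-Lie algebra.

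Next I would evaluate $[x_1,\ldots,x_{n+1}]_{c,\tau}$ for $x_1,\ldots,x_{n+1}\in A$ by inserting the definition of the induced bracket and expanding $[\cdot,\ldots,\cdot]_c$. The $i$-th term $(-1)^{i-1}\tau(x_i)[x_1,\ldots,\hat{x}_i,\ldots,x_{n+1}]_c$ splits as
$$(-1)^{i-1}\tau(x_i)[x_1,\ldots,\hat{x}_i,\ldots,x_{n+1}]+(-1)^{i-1}\tau(x_i)\omega(x_1,\ldots,\hat{x}_i,\ldots,x_{n+1})\,c.$$
Summing over $i$, the first family assembles into $[x_1,\ldots,x_{n+1}]_\tau$ by definition, while the second assembles into $\omega_\tau(x_1,\ldots,x_{n+1})\,c$, giving the announced decomposition.

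It remains to establish that $c$ is central in $(\bar{A},[\cdot,\ldots,\cdot]_{c,\tau})$, that is $[x_1,\ldots,x_n,c]_{c,\tau}=0$ for all $x_1,\ldots,x_n\in A$. In the induced-bracket expansion the summand corresponding to the $c$-slot carries a factor $\tau(c)=0$ and therefore vanishes, while each of the remaining $n$ summands is a scalar multiple of an inner bracket $[x_1,\ldots,\hat{x}_i,\ldots,x_n,c]_c$, which is zero because $c$ is already central in $(\bar{A},[\cdot,\ldots,\cdot]_c)$. Combining this with the previous computation, the short sequence $A\hookrightarrow\bar{A}\twoheadrightarrow A$ exhibits $(\bar{A},[\cdot,\ldots,\cdot]_{c,\tau})$ as a central extension of $(A,[\cdot,\ldots,\cdot]_\tau)$ determined by $\omega_\tau$.

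The argument is purely computational and I do not anticipate any genuine obstacle. The only point requiring attention is keeping straight which property kills which term: $\tau(c)=0$ eliminates the contribution from the last slot of the induced-bracket sum, whereas the centrality of $c$ in $(\bar{A},[\cdot,\ldots,\cdot]_c)$ eliminates terms in which $c$ appears inside an $n$-ary bracket. Note that $\omega_\tau$ need not be checked to be a scalar $2$-cocycle for $(A,[\cdot,\ldots,\cdot]_\tau)$ by hand, as this is automatic from the fundamental identity for $[\cdot,\ldots,\cdot]_{c,\tau}$ via the first part of the earlier proposition on central extensions.
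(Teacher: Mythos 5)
Your proposal is correct and follows essentially the same route as the paper: expand the induced bracket term by term to obtain the decomposition $\AKMSbracket{x_1,...,x_{n+1}}_\tau + \omega_\tau\AKMSpara{x_1,...,x_{n+1}}c$, then kill $\AKMSbracket{x_1,...,x_n,c}_{c,\tau}$ using $\tau(c)=0$ for the last slot and the centrality of $c$ in $\AKMSpara{\bar{A},\AKMSbracket{\cdot,...,\cdot}_c}$ for the others. Your preliminary check that the extended $\tau$ is still a $\AKMSbracket{\cdot,...,\cdot}_c$-trace is a worthwhile explicit step that the paper leaves implicit, but it does not change the argument.
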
 
\begin{proof}
Let $x_1,...,x_{n+1} \in A$:
\begin{align*}
\AKMSbracket{x_1,...,x_{n+1}}_{c,\tau} &= \sum_{i=1}^{n+1} (-1)^{i-1} \tau\AKMSpara{x_i}\AKMSbracket{x_1,...,\hat{x}_i,...,x_{n+1}}_c \\
&=  \sum_{i=1}^{n+1} (-1)^{i-1} \tau\AKMSpara{x_i}\left( \AKMSbracket{x_1,...,\hat{x}_i,...,x_{n+1}} +  \omega(x_1,...,\hat{x}_i,...,x_{n+1})c \right) \\
&=  \sum_{i=1}^{n+1} (-1)^{i-1} \tau\AKMSpara{x_i} \AKMSbracket{x_1,...,\hat{x}_i,...,x_{n+1}}    \\
&+  \left( \sum_{i=1}^{n+1} (-1)^{i-1} \tau\AKMSpara{x_i} \omega(x_1,...,\hat{x}_i,...,x_{n+1}) \right) c \\
&= \AKMSbracket{x_1,...,x_{n+1}}_\tau + \omega_\tau \AKMSpara{x_1,...,x_{n+1}} c.
\end{align*}
The map $\omega_\tau \AKMSpara{x_1,...,x_{n+1}} = \sum_{i=1}^{n+1} (-1)^{i-1} \tau\AKMSpara{x_i} \omega(x_1,...,\hat{x}_i,...,x_{n+1})$ is a skew-symmetric $(n+1)$-linear form, and $\AKMSbracket{\cdot,...,\cdot}_{c,\tau}$ satisfies the fundamental identity. We have also:
\begin{align*}
\AKMSbracket{x_1,...,x_n,c}_{c,\tau}&= \sum_{i=1}^{n} (-1)^{i-1} \tau\AKMSpara{x_i}\AKMSbracket{x_1,...,\hat{x}_i,...,c}_c + (-1)^n \tau(c) \AKMSbracket{x_1,...,x_n} \\
&= 0. \qquad \Big(\AKMSbracket{x_{i_1},...,x_{i_{n-1}},c}_c = 0 \text{ and } \tau\AKMSpara{c} = 0.\Big)
\end{align*}
Therefore $\AKMSpara{\bar{A},\AKMSbracket{\cdot,...,\cdot}_{c,\tau}}$ is a central extension of $(A,\AKMSbracket{\cdot,...,\cdot}_\tau)$. 
\end{proof}


\section{Cohomology of $(n+1)$-Lie algebras induced by $n$-Lie algebras}\label{AKMS:Cohomology}
In this section, we study the connections between  the cohomology of a given $n$-Lie algebra and the cohomology of the induced $(n+1)$-Lie algebra.

Let $(A,\AKMSbracket{\cdot,...,\cdot})$ be an $n$-Lie algebra, $\tau$ a trace and $(A,\AKMSbracket{\cdot,...,\cdot}_\tau)$ the induced $(n+1)$-Lie algebra. Then we have the following correspondence between 1-cocycles and 2-cocycles of $(A,\AKMSbracket{\cdot,...,\cdot})$ and those of $(A,\AKMSbracket{\cdot,...,\cdot}_\tau)$.

\begin{lemma}
If $f : A \to A$ is a derivation of an $n$-Lie algebra and $\tau$ is a trace map, then $\tau \circ f$ is a trace. 
\end{lemma}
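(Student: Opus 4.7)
The plan is direct: unwind the definitions and apply the derivation property of $f$ to push $f$ inside the bracket, then invoke the trace property of $\tau$ term-by-term.

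First I would fix arbitrary elements $x_1, \ldots, x_n \in A$ and compute $(\tau \circ f)(\AKMSbracket{x_1, \ldots, x_n})$. Using that $f$ is a derivation of the $n$-Lie algebra, I can replace $f(\AKMSbracket{x_1, \ldots, x_n})$ by the sum $\sum_{i=1}^n \AKMSbracket{x_1, \ldots, f(x_i), \ldots, x_n}$. Applying $\tau$ and using its linearity, this becomes $\sum_{i=1}^n \tau(\AKMSbracket{x_1, \ldots, f(x_i), \ldots, x_n})$.

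Next I would observe that each summand is $\tau$ applied to a bracket of $n$ elements of $A$, and since $\tau$ is a $\AKMSbracket{\cdot, \ldots, \cdot}$-trace, each such term vanishes. Hence the whole sum is zero, which gives $(\tau \circ f)(\AKMSbracket{x_1, \ldots, x_n}) = 0$ for all $x_1, \ldots, x_n \in A$, as required.

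There is no real obstacle here: the argument is a one-line consequence of the derivation identity followed by $n$ applications of the trace hypothesis. The only thing to be careful about is notational, namely recording the derivation identity in exactly the form given earlier in the paper so the substitution is transparent.
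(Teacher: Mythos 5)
Your argument is correct and coincides exactly with the paper's proof: apply the derivation identity to $f(\AKMSbracket{x_1,\ldots,x_n})$, use linearity of $\tau$, and kill each summand by the trace property. Nothing further is needed.
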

\begin{proof}
For all $x,y \in A$, we have 
\begin{align*}
\tau\AKMSpara{ f\AKMSpara{ \AKMSbracket{x_1,...,x_n} } } &= \tau\AKMSpara{ \sum_{i=1}^n \AKMSbracket{x_1,...,x_{i-1},f(x_i),x_{i+1},...,x_n } }\\
&=  \sum_{i=1}^n \tau\AKMSpara{ \AKMSbracket{x_1,...,x_{i-1},f(x_i),x_{i+1},...,x_n } } = 0.
\end{align*} 
\end{proof}

\begin{proposition}
Let $f:A \to A$ be a derivation of the $n$-Lie algebra $A$, then $f$ is a derivation of the induced $(n+1)$-Lie algebra if and only if
\[ \AKMSbracket{x_1,...,x_{n+1 }}_{\tau \circ f} =0, \forall x_1,...,x_{n+1 } \in A. \]

\end{proposition}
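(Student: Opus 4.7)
The plan is to compute the derivation defect of $f$ with respect to $\AKMSbracket{\cdot,...,\cdot}_\tau$ directly from the definition of the induced bracket, and to show the result collapses exactly to $\AKMSbracket{x_1,...,x_{n+1}}_{\tau \circ f}$.

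First, I would expand the left-hand side $f\AKMSpara{\AKMSbracket{x_1,...,x_{n+1}}_\tau}$ using linearity of $f$ and the defining formula of $\phi_\tau$, then apply the fact that $f$ is a derivation of $\AKMSbracket{\cdot,...,\cdot}$ to each inner $n$-bracket $\AKMSbracket{x_1,...,\hat{x}_k,...,x_{n+1}}$. This yields a double sum indexed by $k$ (the position of the trace) and $j \neq k$ (the position where $f$ lands).

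Next, I would expand the right-hand side $\sum_{i=1}^{n+1}\AKMSbracket{x_1,...,f(x_i),...,x_{n+1}}_\tau$ by applying the definition of $\AKMSbracket{\cdot,...,\cdot}_\tau$ to each summand, being careful to split the inner sum over the trace position $k$ into the case $k=i$ (which yields a factor $\tau(f(x_i))=(\tau\circ f)(x_i)$ multiplying $\AKMSbracket{x_1,...,\hat{x}_i,...,x_{n+1}}$) and the case $k\neq i$ (which yields $\tau(x_k)$ multiplying an $n$-bracket in which $f$ acts on position $i$ and position $k$ is omitted). After reindexing, the $k\neq i$ contribution on the right matches termwise, with identical signs, the double sum produced on the left by the derivation property.

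The residual on the right is exactly
\[
\sum_{i=1}^{n+1}(-1)^i(\tau\circ f)(x_i)\,\AKMSbracket{x_1,...,\hat{x}_i,...,x_{n+1}},
\]
which by the definition of the induced bracket equals $\AKMSbracket{x_1,...,x_{n+1}}_{\tau\circ f}$. (The preceding lemma, ensuring that $\tau\circ f$ is itself a trace, justifies interpreting this expression as the bracket of the $(n+1)$-Lie algebra induced by $\tau\circ f$, though the equivalence we need holds at the level of the formula alone.) Consequently, the derivation identity for $f$ on $A_\tau$ is equivalent to the vanishing of $\AKMSbracket{x_1,...,x_{n+1}}_{\tau\circ f}$ for all arguments, which is the claim.

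The main obstacle is purely bookkeeping: tracking the signs $(-1)^k$ when expanding $\AKMSbracket{\cdot,...,\cdot}_\tau$ with one entry already of the form $f(x_i)$, and verifying that the "cross" terms on both sides cancel index by index rather than by some more subtle skew-symmetry argument. Once the signs are handled correctly, the identification is immediate and no additional structural input is required beyond $f$ being a derivation of the original $n$-Lie bracket.
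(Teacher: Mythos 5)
Your computation is correct and follows essentially the same route as the paper: expand $f\AKMSpara{\AKMSbracket{x_1,\dots,x_{n+1}}_\tau}$ via the derivation property of $f$ on the inner $n$-brackets, match the cross terms against $\sum_i\AKMSbracket{x_1,\dots,f(x_i),\dots,x_{n+1}}_\tau$, and identify the leftover diagonal terms $(\tau\circ f)(x_i)$ as $\AKMSbracket{x_1,\dots,x_{n+1}}_{\tau\circ f}$ (the paper organizes this by adding and subtracting that term rather than expanding the right-hand side, but the identity obtained is the same). The sign bookkeeping you flag is indeed harmless, since the $k$-th slot keeps its index whether or not $f$ acts on another slot, so the equivalence with the vanishing of $\AKMSbracket{\cdot,\dots,\cdot}_{\tau\circ f}$ follows as you state.
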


\begin{proof}
Let $f$ be a derivation of $A$ and $x_1,...,x_{n+1} \in A$:
\begin{align*}
f&\AKMSpara{\AKMSbracket{x_1,...,x_{n+1}}_\tau} = f\AKMSpara{\sum_{i=1}^{n+1} (-1)^{i-1} \tau(x_i)\AKMSbracket{x_1,...,\widehat{x_i},...,x_{n+1} } }\\
&= \sum_{i=1}^{n+1} (-1)^{i-1} \tau(x_i) f\AKMSpara{\AKMSbracket{x_1,...,\widehat{x_i},...,x_{n+1} } }\\
&= \sum_{i=1}^{n+1} (-1)^{i-1} \tau(x_i) \sum_{j=1 ; j\neq i}^{n+1} \AKMSbracket{x_1,...,f(x_j),...,\widehat{x_i},...,x_{n+1} }\\
&= \sum_{j=1}^{n+1}  \sum_{i=1 ; i\neq j}^{n+1} (-1)^{i-1} \tau(x_i) \AKMSbracket{x_1,...,f(x_j),...,\widehat{x_i},...,x_{n+1} } \\
&+ \sum_{j=1}^{n+1} (-1)^{j-1} \tau\AKMSpara{f(x_j)}\AKMSbracket{x_1,...,\widehat{x_j},...,x_{n+1}} - \sum_{j=1}^{n+1} (-1)^{j-1} \tau\AKMSpara{f(x_j)}\AKMSbracket{x_1,...,\widehat{x_j},...,x_{n+1}}  \\
&= \sum_{j=1}^{n+1} \sum_{i=1 ; i\neq j}^{n+1} (-1)^{i-1} \tau(x_i) \AKMSbracket{x_1,...,f(x_j),...,\widehat{x_i},...,x_{n+1} }\\ 
&+ \sum_{j=1}^{n+1} (-1)^{j-1} \tau\AKMSpara{f(x_j)}\AKMSbracket{x_1,...,\widehat{x_j},...,x_{n+1}} - \sum_{j=1}^{n+1} (-1)^{j-1} \tau\AKMSpara{f(x_j)}\AKMSbracket{x_1,...,\widehat{x_j},...,x_{n+1}}\\
&= \sum_{j=1}^{n+1} \AKMSbracket{x_1,...,x_{j-1},f(x_j),x_{j+1},...,x_{n+1}}_\tau -  \sum_{j=1}^{n+1} (-1)^{j-1} \tau\AKMSpara{f(x_j)}\AKMSbracket{x_1,...,\widehat{x_j},...,x_{n+1}} .
\end{align*}
\end{proof}
We provide in the following two examples of computations that illustrate the Proposition above. In the sequel, $(e_i)_{1\leq i \leq dim \mathfrak{g}}$ stands for the   basis of  a Lie algebra $\mathfrak{g}$ and for $x \in \mathfrak{g}$, $(x_i)_{1 \leq i \leq dim \mathfrak{g}}$ are its coordinates with respect to  this basis.
\begin{example}[\cite{akms:ternary}]
Let $\mathfrak{gl}_2(\mathbb{K})$ be the $4$-dimensional Lie algebra defined by: 
\[
 [e_1,e_2] = 2e_2 ; \qquad
[e_1,e_3] =-2e_3 ; \qquad
[e_2,e_3] =e_1. 
\]
It induces (by the trace $\tau (x) = x_4$) a $3$-Lie algebra defined with respect to the same basis by:
\[
 [e_1,e_2,e_4] = 2e_2 ; \qquad
 [e_1,e_3,e_4] =-2e_3  ; \qquad
 [e_2,e_3,e_4] =e_1. 
\]
The derivations of the Lie algebra are defined as:
\begin{align*}
f(e_1)&= -2 a_1 e_2 -2 a_2 e_3, \\
f(e_2)&= a_2 e_1 + a_3 e_2, \\
 f(e_3)&= a_1 e_1 + a_3 e_3, \\
 f(e_4)&= a_4 e_4,
\end{align*}
where $a_1,...,a_4$ are parameters. The first cohomology group $H^1(\mathfrak{gl}_2(\mathbb{K}), \mathfrak{gl}_2(\mathbb{K}))$ is one-dimensional and is spanned by $f_1$ defined as:
\[f_1(e_4)=  e_4 ; \qquad f_1(e_i) = 0 \text{ for } i \neq 4. \]
While the derivations of the corresponding $3$-Lie algebra are defined by:
\begin{align*}
g(e_1)&= a_1 e_1 - 2 a_2 e_2 - 2 a_3 e_3, \\
g(e_2)&= a_3 e_1 + a_4 e_2, \\
g(e_3)&= a_2 e_1 + (2 a_1 - a_4) e_3, \\
g(e_4)&= a_5 e_1 + a_6 e_2 + a_7 e_3 + a_1 e_4,
\end{align*}
where $a_1,...,a_7$ are parameters. The cohomology group is also one-dimensional. 
It turns out that
\[ \dim Z^1(\mathfrak{gl}_2(\mathbb{K}), \mathfrak{gl}_2(\mathbb{K})) = 4 \text{ and }
\dim Z^1(\mathfrak{gl}_2(\mathbb{K})_\tau, \mathfrak{gl}_2(\mathbb{K})_\tau) = 7 \] and
\[ \dim H^1(\mathfrak{gl}_2(\mathbb{K}), \mathfrak{gl}_2(\mathbb{K})) = \dim H^1(\mathfrak{gl}_2(\mathbb{K})_\tau, \mathfrak{gl}_2(\mathbb{K})_\tau).\]
Observe that the cohomology class generating $H^1(\mathfrak{gl}_2(\mathbb{K}), \mathfrak{gl}_2(\mathbb{K}))$ is not included in $Z^1(\mathfrak{gl}_2(\mathbb{K})_\tau, \mathfrak{gl}_2(\mathbb{K})_\tau)$. The cohomology group $H^1(\mathfrak{gl}_2(\mathbb{K})_\tau, \mathfrak{gl}_2(\mathbb{K})_\tau)$ is spanned by $g_1$ defined as:
\[ g_1(e_1) = e_1 ; \quad g_1(e_2) = 0 ; \quad g_1(e_3) = 2e_3 ; \quad g_1(e_4) = e_4. \]
\end{example}
\begin{example}[\cite{akms:ternary}]
We consider the two Lie algebras $M^4$, $M^5$ and $M^8$. See Theorem \ref{Class2Lie} for the definitions. 
For $M^4$, with the trace $\tau$ defined by $\tau(x)=x_1+x_2+x_4$, we still have the same dimensions \[\dim H^1(M^4, M^4)=\dim H^1(M^4_\tau, M^4_\tau) = 6.\]
While (with $\tau(x)=x_1$)
\[ \dim H^1(M^5,M^5) = 8, \]  \[ \dim H^1(M^5_\tau,M^5_\tau) = 9, \]
and (with $\tau(x)=x_1 + x_3$)
\[ \dim H^1(M^8,M^8) = 0 \]  \[ \dim H^1(M^8_\tau,M^8_\tau) = 4. \]
We conjecture that, for a Lie algebra $\mathfrak{g}$:
\[\dim Z^1(\mathfrak{g}, \mathfrak{g}) \leq \dim Z^1(\mathfrak{g}_\tau, \mathfrak{g}_\tau)\]
and
\[\dim H^1(\mathfrak{g}, \mathfrak{g}) \leq \dim H^1(\mathfrak{g}_\tau, \mathfrak{g}_\tau). \]

\end{example}

Now, we consider the 2-cocycles of an $(n+1)$-Lie algebra induced by an $n$-Lie algebra.

\begin{proposition}\label{AKMS-Z2ad}
Let $(A,\AKMSbracket{\cdot,...,\cdot})$ be an $n$-Lie algebra, $\tau$ be a trace and $(A,\AKMSbracket{\cdot,...,\cdot}_\tau)$ be the induced $(n+1)$-Lie algebra. Let $\varphi \in Z^2_{ad}(A,A)$ such that:
\begin{enumerate}
\item $\sum_{i=1}^n \sum_{k=1 ; k\neq i}^n (-1)^{k+n-1} \tau(y_i)\tau(y_k) \varphi(y_1,...,\widehat{y_k},...,y_{i-1}, X_n \cdot x_n, y_{i+1},...,y_n,z)$,
\item $ \sum_{i=1}^n \sum_{k=1 ; k \neq i}^n (-1)^{n+k-1} \tau(y_i)\tau(y_k) \AKMSbracket{y_1,...,\widehat{y_k},...,y_{i-1},\varphi(X_n,x_n),...,y_n,z}$,
\item $\tau\circ \varphi = 0$.
\end{enumerate}
Then $\varphi_\tau \AKMSpara{X,z} = \sum_{i=1}^n (-1)^{i-1} \tau(x_i) \varphi(X_i , z)+(-1)^n \tau(z)\varphi(X_n , x_n)$ is a 2-cocycle of the induced $(n+1)$-Lie algebra.
\end{proposition}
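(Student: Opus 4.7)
The plan is to verify directly that $d^2_\tau\varphi_\tau(X_1,X_2,z)=0$ for all fundamental objects $X_1,X_2\in\wedge^n A$ of the induced $(n+1)$-Lie algebra and all $z\in A$, where $d^2_\tau$ denotes the adjoint coboundary of $A_\tau$. Writing $X_j=(x_j^1,\ldots,x_j^n)$, I would substitute the definition of $\varphi_\tau$ from the statement, together with the formula for the induced bracket $[\cdot,\ldots,\cdot]_\tau$, into each of the six kinds of summands in $d^2_\tau\varphi_\tau(X_1,X_2,z)$: the composition term $-\varphi_\tau(X_1\cdot_\tau X_2,z)$, the two ``inner action'' terms $\mp\varphi_\tau(X_j,X_k\cdot_\tau z)$, the two ``outer action'' terms $\pm X_j\cdot_\tau\varphi_\tau(X_k,z)$, and the final term $-(\varphi_\tau(X_1,-)\cdot_\tau X_2)\cdot_\tau z$.

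After full expansion, I would sort the resulting summands according to the number of factors of the form $\tau(\cdot)$ they carry. The terms that contain exactly one $\tau$-factor arise when either the outer $\varphi_\tau$ contributes a $\tau$ or the induced bracket does, but not both. Thanks to the parallel structure between the formulas for $\varphi_\tau$ and for the induced bracket, these summands repackage, index by index, into linear combinations of the shape $\tau(x)\,d^2\varphi(\widehat{X}^{\,i},Y,z)$, where $\widehat{X}^{\,i}$ is the fundamental object of $A$ obtained by deleting the $i$-th slot of some $X_j$. All such contributions vanish, since $\varphi\in Z^2_{ad}(A,A)$, i.e.\ $d^2\varphi=0$.

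The remaining summands are bilinear in $\tau$. They split into three families according to the origin of the two $\tau$-factors. One family, coming from interactions of the distinguished last summand $(-1)^n\tau(z)\varphi(X_n,x_n)$ of $\varphi_\tau$ with the composition $X_1\cdot_\tau X_2$ expanded via the induced bracket, matches, after relabeling, the expression in condition~(1) and so vanishes by assumption. A second family, arising when the same distinguished summand meets the two outer-action terms $X_j\cdot_\tau\varphi_\tau(X_k,z)$, matches condition~(2) and likewise vanishes. A third, smaller, family contains expressions of the form $\tau(\varphi(\ldots))$, produced when $\varphi$ sits inside a $\tau$ coming from the induced bracket; these collapse by condition~(3), $\tau\circ\varphi=0$.

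The main obstacle is purely combinatorial: the asymmetry of $\varphi_\tau$ between its generic summands and its special $(-1)^n\tau(z)\varphi(X_n,x_n)$ summand, combined with the sign patterns of $d^2_\tau$ and of $[\cdot,\ldots,\cdot]_\tau$, produces a sizable collection of terms that must be matched pairwise or identified with (1)--(3). No individual manipulation is deep; the content of the proposition lies in recognizing that conditions (1)--(3) are exactly tailored to absorb the obstruction to the map $\psi\mapsto\psi_\tau$ sending cocycles to cocycles, beyond what $d^2\varphi=0$ already handles.
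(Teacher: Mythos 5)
Your overall strategy coincides with the paper's: expand $d^2_\tau\varphi_\tau(X,Y,z)$ completely, regroup the bulk of the summands into packages $\tau(\cdot)\tau(\cdot)\,d^2\varphi(\cdot,\cdot,\cdot)$ that vanish because $\varphi\in Z^2_{ad}(A,A)$, and absorb the leftover families by hypotheses (1)--(3); this is precisely the computation carried out in the paper, and it does go through. However, three of your bookkeeping claims are off and would derail the computation if followed literally. First, there are no summands with exactly one $\tau$-factor: each of $\varphi_\tau$ and $[\cdot,\ldots,\cdot]_\tau$ contributes one $\tau$ to every term, so the entire expansion is bilinear in $\tau$, and the $d^2\varphi$-packages are the ones carrying $\tau(x_j)\tau(y_k)$ or $\tau(x_j)\tau(z)$. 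Second, the family matching condition (1) does not come from pairing the distinguished summand $(-1)^n\tau(z)\varphi(X_n,x_n)$ with the composition term; it comes from the \emph{generic} summands of $\varphi_\tau$ evaluated on $X\cdot_\tau Y$, in which the distinguished last term $(-1)^n\tau(y_i)[x_1,\ldots,x_n]$ of the induced bracket occupies the $i$-th slot --- whence the factors $\tau(y_i)\tau(y_k)$ rather than any $\tau(z)$. Third, the family matching condition (2) arises from the final term $-\bigl(\varphi_\tau(X,\cdot)\cdot Y\bigr)\cdot_\tau z$ of the coboundary, where the distinguished summand of $\varphi_\tau(X,y_i)$ inserts $\varphi(X_n,x_n)$ into a slot of the induced bracket; it does not come from the outer-action terms $X\cdot_\tau\varphi_\tau(Y,z)$ and $Y\cdot_\tau\varphi_\tau(X,z)$, whose contributions either enter the $d^2\varphi$-packages or are of the form $\tau(\varphi(\cdots))$ and are killed by condition (3). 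None of these corrections changes the conclusion, but the sorting you describe is not the one the expansion actually produces, so you should redo the term-accounting before trusting the cancellations.
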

\begin{proof}
Let $\varphi \in Z^2_{ad}(A,A)$ such that the conditions above are satisfied, and let \[ \varphi_\tau\AKMSpara{X,z} = \sum_{i=1}^n (-1)^{i-1} \tau(x_i) \varphi(X_i , z)+(-1)^n \tau(z)\varphi(X_n , x_n).\] Then we have:

\begin{align*}
d^2& \varphi_\tau(X,Y,z) = - \varphi_\tau(X \cdot Y, z) - \varphi_\tau(Y, X\cdot z) + \varphi_\tau(X, Y \cdot z) \\
&-\AKMSpara{\varphi_\tau(X, \cdot)\cdot Y}\cdot z - Y\cdot \varphi_\tau (X,z) + X \cdot \varphi_\tau(Y,z)\\
&= - \sum_{j=1}^n \sum_{k=1}^n (-1)^{j+k}\tau(x_j)\tau(x_k) \varphi(X_j\cdot Y_k , z)\\
&- \sum_{j=1}^n \sum_{i=1}^n (-1)^{j+n-1} \tau(x_j)\tau(z) \varphi(y_1,...,y_{i-1}, X_j \cdot y_i, y_{i+1},...,y_n)\\
&- \sum_{i=1}^n \sum_{k=1 ; k\neq i}^n (-1)^{k+n-1} \tau(y_i)\tau(y_k) \varphi(y_1,...,\widehat{y_k},...,y_{i-1}, X_n \cdot x_n, y_{i+1},...,y_n,z)\\
&- \sum_{i=1}^n \tau(y_i)\tau(z) \varphi(y_1,...,y_{i-1}, X_n \cdot x_n, y_{i+1},...,y_n)\\%
&- \sum_{j=1}^n \sum_{k=1}^n (-1)^{j+k} \tau(x_j)\tau(y_k) \varphi(Y_k,X_j \cdot z) - \sum_{i=1}^n (-1)^{i+n-1} \tau(y_i)\tau(z) \varphi(Y_i, X_n \cdot x_n)\\%
&+ \sum_{j=1}^n \sum_{k=1}^n (-1)^{j+k} \tau(x_j) \tau(y_k) \varphi(X_j, Y_k \cdot z) + \sum_{j=1}^n (-1)^{j+n-1} \tau(x_j)\tau(z) \varphi(X_j,Y_n \cdot y_n)\\%
& - \sum_{j=1}^n \sum_{k=1}^n (-1)^{j+k} \tau(x_j) \tau(y_k) \AKMSpara{\varphi_\tau(X_j, \cdot)\cdot Y_k}\cdot z\\
&- \sum_{i=1}^n \sum_{j=1}^n (-1)^{n+j-1} \tau(x_j)\tau(z) \AKMSbracket{y_1,...,y_{i-1},\varphi(X_j,y_i),...,y_n} \\
&- \sum_{i=1}^n \sum_{k=1 ; k \neq i}^n (-1)^{n+k-1} \tau(y_i)\tau(y_k) \AKMSbracket{y_1,...,\widehat{y_k},...,y_{i-1},\varphi(X_n,x_n),...,y_n,z}\\
&- \sum_{i=1}^n \tau(y_i)\tau(z) \AKMSbracket{y_1,...,y_{i-1},\varphi(X_n,x_n),...,y_n}%
- \sum_{j=1}^n \sum_{k=1}^n (-1)^{j+k} \tau(x_j)\tau(y_k) Y_k\cdot \varphi(X_j, z) \\
&- \sum_{i=1}^n (-1)^{i+n-1} \tau(y_i)\tau(z) Y_i \cdot \varphi(X_n, x_n)%
+ \sum_{j=1}^n \sum_{k=1}^n (-1)^{j+k} \tau(x_j) \tau(y_k) X_j\cdot \varphi(Y_k , z)\\
&+ \sum_{j=1}^n (-1)^{j+n-1} \tau(x_j)\tau(z) X_j\cdot \varphi(Y_n, y_n) %
- \sum_{i=1}^n \sum_{j=1}^n (-1)^{i+j} \tau(x_j) \tau(\varphi(X_j,y_i)) Y_i \cdot z \\
&- \sum_{i=1}^n (-1)^{i+n-1} \tau(x_j) \tau(\varphi(X_j,y_i)) Y_i \cdot z\\
&- \sum_{i=1}^n (-1)^{n+i-1} \tau(x_i) \tau(\varphi(X_i,z) Y_n \cdot y_n -\tau(z)\tau(\varphi(X_n,x_n)) Y_n \cdot y_n \\
&+ \sum_{i=1}^n (-1)^{n+i-1} \tau(y_i) \tau(\varphi(Y_i,z) X_n \cdot x_n -\tau(z)\tau(\varphi(Y_n,y_n)) X_n \cdot x_n \\
&= \sum_{j=1}^n \sum_{k=1}^n (-1)^{j+k} \tau(x_j) \tau(x_k) d^2\varphi(X_j,Y_k,z) \\ 
& - \sum_{j=1}^n (-1)^{j+n-1} \tau(x_j)\tau(z) \AKMSpara{\sum_{i=1}^{n-1} \varphi(y_1,...,y_{i-1},X_j \cdot y_i,y_{i+1},...,y_{n-1},y_n) + \varphi(Y_n, X_j \cdot y_n)}\\
&+ \sum_{j=1}^n \tau(x_j)\tau(z) \varphi(X_j, Y_n \cdot y_n) \\
& - \sum_{j=1}^n (-1)^{j+n-1} \tau(x_j)\tau(z) \AKMSpara{\sum_{i=1}^{n-1} \AKMSbracket{y_1,...,y_{i-1},X_j \cdot y_i,y_{i+1},...,y_{n-1},y_n} + Y_n \cdot \varphi( X_j, y_n)}\\
&+ \sum_{j=1}^n \tau(x_j)\tau(z) X_j\cdot \varphi(Y_n, y_n) - \sum_{i=1}^n \tau(y_i) \tau(z) \AKMSpara{(-1)^{n-i}+(-1)^{n+i-1}}\varphi(Y_i,X_n \cdot x_n)\\
&- \sum_{i=1}^n \sum_{k=1 ; k\neq i}^n (-1)^{k+n-1} \tau(y_i)\tau(y_k) \varphi(y_1,...,\widehat{y_k},...,y_{i-1}, X_n \cdot x_n, y_{i+1},...,y_n,z)\\
&- \sum_{i=1}^n \sum_{k=1 ; k \neq i}^n (-1)^{n+k-1} \tau(y_i)\tau(y_k) \AKMSbracket{y_1,...,\widehat{y_k},...,y_{i-1},\varphi(X_n,x_n),...,y_n,z}\\
&- \sum_{i=1}^n \tau(y_i)\tau(z) \AKMSbracket{y_1,...,y_{i-1},\varphi(X_n,x_n),...,y_n}\\
&- \sum_{i=1}^n \sum_{j=1}^n (-1)^{i+j} \tau(x_j) \tau(\varphi(X_j,y_i)) Y_i \cdot z  - \sum_{i=1}^n (-1)^{i+n-1} \tau(x_j) \tau(\varphi(X_j,y_i)) Y_i \cdot z\\
&- \sum_{i=1}^n (-1)^{n+i-1} \tau(x_i) \tau(\varphi(X_i,z) Y_n \cdot y_n -\tau(z)\tau(\varphi(X_n,x_n)) Y_n \cdot y_n \\
&+ \sum_{i=1}^n (-1)^{n+i-1} \tau(y_i) \tau(\varphi(Y_i,z) X_n \cdot x_n +\tau(z)\tau(\varphi(Y_n,y_n)) X_n \cdot x_n \\
& =  \sum_{j=1}^n \sum_{k=1}^n (-1)^{j+k} \tau(x_j) \tau(x_k) d^2\varphi(X_j,Y_k,z)  + \sum_{j=1}^n (-1)^{j+n-1} \tau(x_j)\tau(z) d^2 \varphi(X_j,Y_n,y_n)\\
&- \sum_{i=1}^n \sum_{k=1 ; k\neq i}^n (-1)^{k+n-1} \tau(y_i)\tau(y_k) \varphi(y_1,...,\widehat{y_k},...,y_{i-1}, X_n \cdot x_n, y_{i+1},...,y_n,z)\\
&- \sum_{i=1}^n \sum_{k=1 ; k \neq i}^n (-1)^{n+k-1} \tau(y_i)\tau(y_k) \AKMSbracket{y_1,...,\widehat{y_k},...,y_{i-1},\varphi(X_n,x_n),...,y_n,z}\\
&- \sum_{i=1}^n \sum_{j=1}^n (-1)^{i+j} \tau(x_j) \tau(\varphi(X_j,y_i)) Y_i \cdot z - \sum_{i=1}^n (-1)^{i+n-1} \tau(x_j) \tau(\varphi(X_j,y_i)) Y_i \cdot z\\
&- \sum_{i=1}^n (-1)^{n+i-1} \tau(x_i) \tau(\varphi(X_i,z) Y_n \cdot y_n -\tau(z)\tau(\varphi(X_n,x_n)) Y_n \cdot y_n \\
&+ \sum_{i=1}^n (-1)^{n+i-1} \tau(y_i) \tau(\varphi(Y_i,z) X_n \cdot x_n +\tau(z)\tau(\varphi(Y_n,y_n)) X_n \cdot x_n \\
&= 0.
\end{align*}

\end{proof}

\begin{proposition}
Every $1$-cocycle for the scalar cohomology of an $n$-Lie algebra $(A,\AKMSbracket{\cdot,...,\cdot})$ is a $1$-cocycle for the scalar cohomology of the induced $(n+1)$-Lie algebra. Notice that $1$-cocycles for the scalar cohomology are exactly traces.
\end{proposition}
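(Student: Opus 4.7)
My plan is to reduce the statement to the essentially tautological observation that a map vanishing on every $n$-bracket automatically vanishes on every scalar combination of $n$-brackets. First I would make explicit what $\psi \in Z^1_0(A,\mathbb{K})$ actually says: specializing the scalar coboundary operator at $p=1$ leaves the double sum empty, so
\[ d^1\psi(X,z) = -\psi(X \cdot z) = -\psi([x^1,\ldots,x^{n-1},z]), \]
and $d^1\psi = 0$ is precisely the trace condition $\psi([y_1,\ldots,y_n]) = 0$ for all $y_i \in A$. This both justifies the parenthetical remark in the statement and, equally importantly, gives the same reformulation on the induced side: being a $1$-cocycle in $Z^1_0(A_\tau,\mathbb{K})$ is exactly being a $[\cdot,\ldots,\cdot]_\tau$-trace.

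Next I would verify directly that the trace property passes from $[\cdot,\ldots,\cdot]$ to $[\cdot,\ldots,\cdot]_\tau$. For arbitrary $x_1,\ldots,x_{n+1} \in A$, linearity of $\psi$ combined with the definition of the induced bracket gives
\[ \psi\bigl([x_1,\ldots,x_{n+1}]_\tau\bigr) = \sum_{k=1}^{n+1}(-1)^k \tau(x_k)\, \psi\bigl([x_1,\ldots,\hat{x}_k,\ldots,x_{n+1}]\bigr). \]
Each argument of $\psi$ on the right-hand side is an $n$-bracket of $A$, so the hypothesis forces every term to vanish. Hence $\psi$ is a $[\cdot,\ldots,\cdot]_\tau$-trace and, by the reformulation in the previous paragraph applied on the $(n+1)$-Lie side, a $1$-cocycle of the induced algebra.

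The main obstacle is really just this conceptual unpacking of $d^1$ at $p=1$; the actual verification is a single line because $[\cdot,\ldots,\cdot]_\tau$ is by construction a scalar combination of $n$-brackets of $A$. No identity beyond linearity of $\tau$ and $\psi$ is needed, the Filippov identity plays no role, and the $\phi$-trace property of $\tau$ itself is not invoked, so I expect the final written proof to be no longer than a few lines.
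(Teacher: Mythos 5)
Your proof is correct and follows essentially the same route as the paper: identify the $1$-cocycle condition with vanishing on all $n$-brackets, then observe that $[x_1,\ldots,x_{n+1}]_\tau$ is a linear combination of $n$-brackets. The only cosmetic difference is that you expand the induced bracket inline, whereas the paper cites the inclusion $[A,\ldots,A]_\tau\subset[A,\ldots,A]$ from Remark \ref{AKMS-D3subD}, which rests on exactly the same expansion.
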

\begin{proof}
Let $\omega$ be a $1$-cocycle for the scalar cohomology of $(A,\AKMSbracket{\cdot,...,\cdot})$, then 
\[\forall x_1,...,x_{n-1},z \in A, d^1 \omega(x_1,...,x_{n-1},z) = \omega \AKMSpara{\AKMSbracket{x_1,...,x_{n-1},z}} = 0,\]
 which is equivalent to $\AKMSbracket{A,...,A} \subset \ker \omega$. By Remark \ref{AKMS-D3subD}, $\AKMSbracket{A,...,A}_\tau \subset \AKMSbracket{A,...,A}$ and then $\AKMSbracket{A,...,A}_\tau \subset \ker \omega$, that is 
\[ \forall x_1,...,x_n,z \in A, \omega\AKMSpara{\AKMSbracket{x_1,...,x_n,z}_\tau}=d^1 \omega \AKMSpara{x_1,...,x_n,z} = 0.\]
 It means that $\omega$ is a $1$-cocycle for the scalar cohomology of $\AKMSpara{A,\AKMSbracket{\cdot,...,\cdot}_\tau}$.  
\end{proof}

\begin{proposition}\label{AKMS-Z2tri}
Let $\alpha \in Z^2_{0}(A,\mathbb{K})$ such that:

\[  \sum_{i=1}^n \sum_{k=1 ; k\neq i}^n (-1)^{k+n-1} \tau(y_i)\tau(y_k) \alpha(y_1,...,\widehat{y_k},...,y_{i-1}, X_n \cdot x_n, y_{i+1},...,y_n,z)=0.\]

Then $\alpha_\tau\AKMSpara{X,z} = \sum_{i=1}^n (-1)^{i-1} \tau(x_i) \alpha(X_i , z)+(-1)^n \tau(z)\alpha(X_n , x_n)$ is a $2$-cocycle of the induced $(n+1)$-Lie algebra.
\end{proposition}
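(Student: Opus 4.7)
The plan is to mirror the computation carried out in the proof of Proposition \ref{AKMS-Z2ad}, but now for the scalar coboundary operator, which is substantially shorter than the adjoint one because the terms $X_j \cdot \psi(\ldots)$ and the trailing term $(\psi(\ldots)\cdot X_p)\cdot z$ are simply absent. Writing $X=(x_1,\ldots,x_n)$ and $Y=(y_1,\ldots,y_n)$ for fundamental objects of $A_\tau$ and $z \in A$, the expression to compute is
\[ d^2\alpha_\tau(X,Y,z) = -\alpha_\tau(X\cdot_\tau Y, z) - \alpha_\tau(Y, X\cdot_\tau z) + \alpha_\tau(X, Y\cdot_\tau z). \]

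First I would substitute the definition of $\alpha_\tau$ into each of the three summands, together with the definition of $\AKMSbracket{\cdot,\ldots,\cdot}_\tau$ whenever an induced bracket appears inside an argument. Each substitution introduces one trace factor from $\alpha_\tau$ and a second trace factor whenever a nested induced bracket is expanded, so the resulting sum splits naturally into three blocks according to the pair of entries carrying the trace factors: those with $\tau(x_j)\tau(y_k)$, those with $\tau(x_j)\tau(z)$ or $\tau(y_i)\tau(z)$, and the ``mixed'' $\tau(y_i)\tau(y_k)$-terms with $k\neq i$ which are produced when the induced bracket inside $X\cdot_\tau Y$ hits the trailing slot $(-1)^n \tau(y_i)\AKMSbracket{X_n,x_n}$.

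Next I would recombine these blocks. The sign and index bookkeeping is the tedious part, but it is completely parallel to the corresponding step in Proposition \ref{AKMS-Z2ad}: the $\tau(x_j)\tau(y_k)$-block reassembles into $\sum_{j,k}(-1)^{j+k}\tau(x_j)\tau(y_k)\,d^2\alpha(X_j,Y_k,z)$, while the $\tau(x_j)\tau(z)$- and $\tau(y_i)\tau(z)$-blocks reassemble into telescoping combinations of $d^2\alpha$ applied to the triples $(X_j,Y_n,y_n)$ and $(Y_i, X_n, x_n)$. All of these vanish because $\alpha \in Z^2_0(A,\mathbb{K})$. The only surviving contributions are precisely the mixed $\tau(y_i)\tau(y_k)$-terms, which match term-by-term the expression appearing in the hypothesis of the proposition, and therefore vanish by assumption.

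The main obstacle is the sign and index bookkeeping in the expansion. The simplification with respect to the adjoint case is twofold: no terms of the form $X_j\cdot \alpha_\tau(\ldots)$ or $(\alpha_\tau(\ldots)\cdot X_p)\cdot z$ can appear since $\alpha$ is $\mathbb{K}$-valued, so the analogues of conditions (2) and (3) of Proposition \ref{AKMS-Z2ad} become vacuous, and only the scalar analogue of condition (1) survives, which is precisely the hypothesis assumed here.
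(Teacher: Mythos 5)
Your proposal is correct and follows essentially the same route as the paper: expand $d^2\alpha_\tau$ using the definitions of $\alpha_\tau$ and the induced bracket, regroup the terms by their pair of trace factors into copies of $d^2\alpha$ (which vanish since $\alpha\in Z^2_0(A,\mathbb{K})$), and observe that the only surviving block is exactly the mixed $\tau(y_i)\tau(y_k)$ expression killed by the hypothesis. The one small bookkeeping difference is that in the paper the $\tau(y_i)\tau(z)$-terms cancel directly via the sign identity $(-1)^{n-i}+(-1)^{n+i-1}=0$ rather than reassembling into a $d^2\alpha$, but this does not change the argument.
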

\begin{proof}
Let $\alpha \in Z^2_{0}(A,\mathbb{K})$ satisfying the condition above, and let \[\alpha_\tau\AKMSpara{X,z} = \sum_{i=1}^n (-1)^{i-1} \tau(x_i) \alpha(X_i , z)+(-1)^n \tau(z)\alpha(X_n , x_n).\] Then we have:
\begin{align*}
d^2 &\alpha_\tau(X,Y,z) = - \alpha_\tau(X \cdot Y, z) - \alpha_\tau(Y, X\cdot z) + \alpha_\tau(X, Y \cdot z) \\
&= - \sum_{j=1}^n \sum_{k=1}^n (-1)^{j+k}\tau(x_j)\tau(x_k) \alpha(X_j\cdot Y_k , z)\\
&- \sum_{j=1}^n \sum_{i=1}^n (-1)^{j+n-1} \tau(x_j)\tau(z) \alpha(y_1,...,y_{i-1}, X_j \cdot y_i, y_{i+1},...,y_n)\\
&- \sum_{i=1}^n \sum_{k=1 ; k\neq i}^n (-1)^{k+n-1} \tau(y_i)\tau(y_k) \alpha(y_1,...,\widehat{y_k},...,y_{i-1}, X_n \cdot x_n, y_{i+1},...,y_n,z)\\
&- \sum_{i=1}^n \tau(y_i)\tau(z) \alpha(y_1,...,y_{i-1}, X_n \cdot x_n, y_{i+1},...,y_n)\\
&- \sum_{j=1}^n \sum_{k=1}^n (-1)^{j+k} \tau(x_j)\tau(y_k) \alpha(Y_k,X_j \cdot z) - \sum_{i=1}^n (-1)^{i+n-1} \tau(y_i)\tau(z) \alpha(Y_i, X_n \cdot x_n)\\
&+ \sum_{j=1}^n \sum_{k=1}^n (-1)^{j+k} \tau(x_j) \tau(y_k) \alpha(X_j, Y_k \cdot z) + \sum_{j=1}^n (-1)^{j+n-1} \tau(x_j)\tau(z) \alpha(X_j,Y_n \cdot y_n)\\
&= \sum_{j=1}^n \sum_{k=1}^n (-1)^{j+k} \tau(x_j) \tau(x_k) d^2\alpha(X_j,Y_k,z) \\
& - \sum_{j=1}^n (-1)^{j+n-1} \tau(x_j)\tau(z) \AKMSpara{\sum_{i=1}^{n-1} \alpha(y_1,...,y_{i-1},X_j \cdot y_i,y_{i+1},...,y_{n-1},y_n) + \alpha(Y_n, X_j \cdot y_n)}\\
&+ \sum_{j=1}^n \tau(x_j)\tau(z) \alpha(X_j, Y_n \cdot y_n) - \sum_{i=1}^n \tau(y_i) \tau(z) \AKMSpara{(-1)^{n-i}+(-1)^{n+i-1}}\alpha(Y_i,X_n \cdot x_n)\\
&- \sum_{i=1}^n \sum_{k=1 ; k\neq i}^n (-1)^{k+n-1} \tau(y_i)\tau(y_k) \alpha(y_1,...,\widehat{y_k},...,y_{i-1}, X_n \cdot x_n, y_{i+1},...,y_n,z)\\
& =  \sum_{j=1}^n \sum_{k=1}^n (-1)^{j+k} \tau(x_j) \tau(x_k) d^2\alpha(X_j,Y_k,z) + \sum_{j=1}^n (-1)^{j+n-1} \tau(x_j)\tau(z) d^2 \alpha(X_j,Y_n,y_n)\\
&- \sum_{i=1}^n \sum_{k=1 ; k\neq i}^n (-1)^{k+n-1} \tau(y_i)\tau(y_k) \alpha(y_1,...,\widehat{y_k},...,y_{i-1}, X_n \cdot x_n, y_{i+1},...,y_n,z)\\
&= 0.
\end{align*}

\end{proof}

\begin{lemma}\label{AKMScoBtau}
Let $\alpha \in C^1(A,\mathbb{K})$. Then:
\[ d_\tau^1 \alpha\AKMSpara{x_1,...,x_{n+1}} = \sum_{i=1}^{n+1} (-1)^{i-1} \tau(x_i) d^1\alpha\AKMSpara{x_1,...,\widehat{x_i},...,x_{n+1}}, \forall x_1,...,x_{n+1} \in A, \]
where $d^p_\tau$ is the coboundary operator for the cohomology complex of $A_\tau$.
\end{lemma}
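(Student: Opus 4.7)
The plan is to reduce the lemma to a direct unfolding of the definitions, exploiting the fact that scalar $1$-cochains are particularly simple under the coboundary map. Specialising the general formula for $d^p$ in the trivial-action case to $p=1$ kills the first (double) sum, because $\sum_{k=j+1}^{1}$ is empty, and leaves only the single term with $j=1$. This yields
\[
d^1\alpha(x_1,\ldots,x_n) \;=\; -\,\alpha\bigl([x_1,\ldots,x_n]\bigr),
\]
and exactly the same specialisation, applied instead to the induced algebra $(A,[\cdot,\ldots,\cdot]_\tau)$, gives $d^1_\tau\alpha(x_1,\ldots,x_{n+1}) = -\alpha\bigl([x_1,\ldots,x_{n+1}]_\tau\bigr)$. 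So, for scalar $1$-cochains, the coboundary is essentially $\alpha$ composed with a single bracket.

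From here the main computation is a two-line unfolding. I would substitute the defining formula for $[x_1,\ldots,x_{n+1}]_\tau$ into the right-hand side of $d^1_\tau\alpha(x_1,\ldots,x_{n+1}) = -\alpha\bigl([x_1,\ldots,x_{n+1}]_\tau\bigr)$, and then push $\alpha$ through the resulting finite sum using its $\mathbb{K}$-linearity. Each resulting summand equals $-(-1)^{i-1}\tau(x_i)\,\alpha\bigl([x_1,\ldots,\widehat{x_i},\ldots,x_{n+1}]\bigr)$, which is exactly $(-1)^{i-1}\tau(x_i)\,d^1\alpha(x_1,\ldots,\widehat{x_i},\ldots,x_{n+1})$ by the identification recalled above. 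Summing over $i$ assembles precisely the claimed expression.

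The only point requiring care is sign bookkeeping: one must apply one and the same sign convention for the scalar coboundary operator on both $A$ and $A_\tau$, so that the overall $-1$ appearing in $d^1\alpha = -\alpha\circ[\cdot,\ldots,\cdot]$ and its $(n+1)$-ary analogue cancel uniformly out of the final identity. Beyond that there is no real obstacle: the lemma uses neither the cocycle condition on $\alpha$ nor the trace property of $\tau$, but only the linearity of $\alpha$ and the explicit construction of $[\cdot,\ldots,\cdot]_\tau$ from $[\cdot,\ldots,\cdot]$, so the statement is a purely formal consequence of the definitions.
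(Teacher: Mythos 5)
Your proposal is correct and follows essentially the same route as the paper: specialise the scalar coboundary to $p=1$ so that $d^1\alpha$ (resp.\ $d^1_\tau\alpha$) is just $\alpha$ composed with the bracket up to sign, expand $[\cdot,\ldots,\cdot]_\tau$ by its definition, and push $\alpha$ through by linearity. (You are in fact slightly more careful than the paper's own proof, which writes $d^1\alpha=\alpha\circ[\cdot,\ldots,\cdot]$ and omits the factor $(-1)^{1}$ coming from the general formula; as you note, this sign appears identically on both sides and cancels, so the stated identity is unaffected.)
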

\begin{proof}
Let $\alpha \in C^1(A,\mathbb{K})$, $x_1,...,x_{n+1} \in A$, then we have:
\begin{align*}
d_\tau^1 \alpha\AKMSpara{x_1,...,x_{n+1}} &= \alpha\AKMSpara{\AKMSbracket{x_1,...,x_{n+1}}_\tau}\\
&=\sum_{i=1}^{n+1} (-1)^{i-1} \tau(x_i) \alpha\AKMSpara{\AKMSbracket{x_1,...,\widehat{x_i},...,x_{n+1}}}\\
&=  \sum_{i=1}^{n+1} (-1)^{i-1} \tau(x_i) d^1\alpha\AKMSpara{x_1,...,\widehat{x_i},...,x_{n+1}}.
\end{align*}

\end{proof}

\begin{proposition}\label{B2triv_eq}
Let $\varphi_1,\varphi_2 \in Z^2_{0}(A,\mathbb{K})$ satisfying conditions of Theorem \ref{AKMS-Z2tri}. If $\varphi_1,\varphi_2$ are in the same cohomology class then $\psi_1,\psi_2$ defined by:
\[ \psi_i\AKMSpara{x_1,...,x_{n+1}} = \sum_{j=1}^{n+1} (-1)^{j-1} \tau \AKMSpara{x_j} \varphi_i \AKMSpara{x_1,...,\widehat{x_j},...,x_{n+1}}, i=1,2, \]
are in the same cohomology class.
\end{proposition}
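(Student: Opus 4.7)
The plan is to reduce the claim to a direct application of Lemma \ref{AKMScoBtau}. The assignment
\[ \varphi \longmapsto \Bigl((x_1,\dots,x_{n+1}) \mapsto \sum_{j=1}^{n+1} (-1)^{j-1}\tau(x_j)\,\varphi(x_1,\dots,\widehat{x_j},\dots,x_{n+1})\Bigr) \]
from $n$-cochains on $A$ to $(n+1)$-cochains on $A_\tau$ is manifestly linear in $\varphi$, so $\psi_2-\psi_1$ is produced by the very same formula applied to the difference $\varphi_2-\varphi_1$.

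Since $\varphi_1$ and $\varphi_2$ are assumed to represent the same cohomology class of $(A,[\cdot,\dots,\cdot])$, there exists a $1$-cochain $\alpha\in C^1(A,\mathbb{K})$ with $\varphi_2-\varphi_1=d^1\alpha$. Substituting this into the formula above I obtain
\[ (\psi_2-\psi_1)(x_1,\dots,x_{n+1}) = \sum_{j=1}^{n+1} (-1)^{j-1}\tau(x_j)\,d^1\alpha(x_1,\dots,\widehat{x_j},\dots,x_{n+1}). \]
By Lemma \ref{AKMScoBtau} the right-hand side equals $d^1_\tau\alpha(x_1,\dots,x_{n+1})$, so $\psi_2-\psi_1$ is a coboundary for the scalar cohomology of $A_\tau$, which is precisely the claim.

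The main obstacle is essentially absent: Lemma \ref{AKMScoBtau} was set up exactly so that the "insert $\tau$" operation intertwines $d^1$ on $A$ with $d^1_\tau$ on $A_\tau$. Note that the additional hypotheses on $\varphi_1,\varphi_2$ from Proposition \ref{AKMS-Z2tri} play no role in the comparison of $\psi_1$ and $\psi_2$; they are only used to guarantee that each $\psi_i$ is itself a cocycle, which is already part of the setup. The one subtlety worth verifying is that Lemma \ref{AKMScoBtau} applies to an arbitrary $\alpha\in C^1(A,\mathbb{K})$ (not only cocycles), which is indeed how it is stated, so the substitution with $\alpha$ coming from the cohomology relation $\varphi_2-\varphi_1=d^1\alpha$ is legitimate.
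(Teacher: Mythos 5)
Your proof is correct and follows exactly the paper's own argument: use linearity of the map $\varphi \mapsto \psi$ to reduce to $\varphi_2-\varphi_1=d^1\alpha$, then apply Lemma \ref{AKMScoBtau} to identify the result with $d_\tau^1\alpha$. Your added remark that the hypotheses of Proposition \ref{AKMS-Z2tri} are not needed for this comparison is a accurate observation, but the core reasoning coincides with the paper's.
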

\begin{proof}
Let $\varphi_1,\varphi_2 \in Z^2_{0}(A,\mathbb{K})$ be two cocycles in the same cohomology class, that is \[\varphi_2 - \varphi_1 =d^1\alpha,\  \alpha \in C^1(A,\mathbb{K})\] satisfying conditions of Theorem \ref{AKMS-Z2tri}, and 
\[ \psi_i\AKMSpara{x_1,...,x_{n+1}} = \sum_{j=1}^{n+1} (-1)^{j-1} \tau \AKMSpara{x_j} \varphi_i \AKMSpara{x_1,...,\widehat{x_j},...,x_{n+1}}, i=1,2. \]
Then we have:
\begin{align*}
\psi_2\AKMSpara{x_1,...,x_{n+1}}-\psi_1\AKMSpara{x_1,...,x_{n+1}} &= \sum_{j=1}^{n+1} (-1)^{j-1} \tau \AKMSpara{x_j} (\varphi_2 \AKMSpara{x_1,...,\widehat{x_j},...,x_{n+1}} \\
&- \sum_{j=1}^{n+1} (-1)^{j-1} \tau \AKMSpara{x_j}  \varphi_1 \AKMSpara{x_1,...,\widehat{x_j},...,x_{n+1}})\\
&= \sum_{j=1}^{n+1} (-1)^{j-1} \tau \AKMSpara{x_j} (\varphi_2 -\varphi_1) \AKMSpara{x_1,...,\widehat{x_j},...,x_{n+1}}\\
&= \sum_{j=1}^{n+1} (-1)^{j-1} \tau \AKMSpara{x_j} d_1\alpha \AKMSpara{x_1,...,\widehat{x_j},...,x_{n+1}}\\
&= d_\tau^1 \alpha\AKMSpara{x_1,...,x_{n+1}}.
\end{align*}
That means that $\psi_1$ and $\psi_2$ are in the same cohomology class. 
\end{proof}

\begin{example}[\cite{akms:ternary}]
Consider the $4$-dimensional Lie algebra $(A,\AKMSbracket{.,.})$ defined  with respect to basis $\{e_1,e_2,e_3,e_4\}$ by:
\[ \AKMSbracket{e_2,e_4}=e_3\ ;\ \AKMSbracket{e_3,e_4}=e_3, \]
(remaining brackets are either obtained by skew-symmetry or zero), and let $\omega$ be a skew-symmetric bilinear form on $A$. The map $\omega$ is fully defined by the scalars 
\[\omega_{ij}=\omega\AKMSpara{e_i,e_j}, \quad 1\leq i<j \leq 4.\]
By solving the equations for $\omega$ to be a $2$-cocycle, that is 
$d ^2 \omega (e_i,e_j,e_k)=0$ for $1\leq i<j<k\leq 4$, 
we get:
\[ \omega_{13}=0 \text{ and } \omega_{23}=0. \]
Now, let $\alpha$ be a linear form on $A$, defined by $\alpha\AKMSpara{e_i}=\alpha_i, 1\leq i \leq 4$. We find that $d^1\alpha\AKMSpara{e_2,e_4}=d^1\alpha\AKMSpara{e_3,e_4}=\alpha_3$ and $d^1\alpha\AKMSpara{e_i,e_j}=0$ for other values of $i$ and $j$ ($i<j$).
Now consider the trace map $\tau$ such that $\tau\AKMSpara{e_1}=1$ and $\tau\AKMSpara{e_i}=0, i \neq 1$, and the $2$-cocycles $\lambda$ and $\mu$ defined by:
\[ \lambda\AKMSpara{e_1,e_2}=1 \]
and
\[ \mu\AKMSpara{e_2,e_4}=1\ ;\  \mu\AKMSpara{e_3,e_4}=-1. \]
Central extensions of $(A,\AKMSbracket{.,.})$ by $\lambda$ and $\mu$ are respectively given by ($\bar{A} = A \oplus \mathbb{K} c$):
\[ \AKMSbracket{e_1,e_2}_\lambda = c\ ;\ \AKMSbracket{e_2,e_4}_\lambda = e_3\ ;\  \AKMSbracket{e_3,e_4}_\lambda = e_3 \]
and 
\[ \AKMSbracket{e_2,e_4}_\mu = e_3 + c\ ;\ \AKMSbracket{e_3,e_4}_\mu = e_3 - c. \]

The $3$-Lie algebras induced by $(A,\AKMSbracket{.,.})$ and by these central extensions are given by:
\[ \AKMSbracket{e_1,e_2,e_4}_\tau = e_3\ ;\ \AKMSbracket{e_1,e_3,e_4}_\tau = e_3, \]
\[ \AKMSbracket{e_1,e_2,e_4}_{\tau,\lambda} = e_3\ ;\ \AKMSbracket{e_1,e_3,e_4}_{\tau,\lambda} = e_3, \] and
\[ \AKMSbracket{e_1,e_2,e_4}_{\tau,\mu} = e_3 + c\ ;\ \AKMSbracket{e_1,e_3,e_4}_{\tau,\mu} = e_3 - c. \]
We can see that here the central extension given by $\lambda$ induces a trivial one, while the one given by $\mu$ induces a non-trivial one. This example shows also that the converse of Proposition \ref{B2triv_eq} is, in general, not true.
\end{example}

\section{On classification of $(n+1)$-Lie Algebras Induced by $n$-Lie Algebras}\label{AKMS:Classifications}
In this section, we give a list of all $(n+1)$-Lie algebras induced by $n$-Lie algebras in dimension $d\leq n+2$, based on the classifications given in \cite{Filippov:nLie} and \cite{Bai:nLie:n+2}. It generalizes the results obtained in \cite{akms:ternary}, which part of them were found independently in \cite{Bai:rlz3}.  To this end, we shall use the following result:
\begin{proposition} \label{AKMS-3to2}
Let $\AKMSpara{A,\AKMSbracket{\cdot,...,\cdot}}$ be an $(n+1)$-Lie algebra, and $\AKMSpara{e_i}_{1\leq i \leq d}$ a basis of $A$. If there exists $e_{i_0}$ in this basis, such that the multiplication table of $\AKMSpara{A,\AKMSbracket{\cdot,...,\cdot}}$ is given by:
\[ \AKMSbracket{e_{i_0},e_{j_1},...,e_{j_n}} = x_{{j_1}...{j_n}} \in A, \  j_k \neq i_0\ (1\leq k \leq n) \text{ and } j_k : 1\leq k \leq n \text{ are all different}, \]
with $e_{i_0}$ and $x_{{j_1}...{j_n}}$ linearly independent, then the $(n+1)$-Lie algebra $\AKMSpara{A,\AKMSbracket{\cdot,...,\cdot}}$ is induced by an $n$-Lie algebra.
\end{proposition}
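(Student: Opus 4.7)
The plan is to reverse-engineer an $n$-Lie bracket $\phi$ on $A$ together with a $\phi$-trace $\tau$ for which $\phi_\tau$ equals the given $(n+1)$-ary bracket. The distinguished element $e_{i_0}$ dictates the construction: fixing it in one slot of the $(n+1)$-ary bracket produces the candidate for $\phi$, and $\tau$ should single out $e_{i_0}$ among the basis vectors.

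Concretely, I would first set
\[ \phi(y_1, \ldots, y_n) := [e_{i_0}, y_1, \ldots, y_n], \qquad y_1, \ldots, y_n \in A. \]
The computation displayed in the Remark preceding the proposition, which shows that fixing one slot in an $n$-ary bracket yields a lower-arity bracket satisfying the Filippov identity, applies verbatim to the $(n+1)$-ary bracket and establishes that $(A, \phi)$ is an $n$-Lie algebra. I would then use the linear independence hypothesis to pick a linear form $\tau : A \to \mathbb{K}$ with $\tau(e_{i_0}) = -1$ and $\tau(x_{j_1 \ldots j_n}) = 0$ for every tuple appearing in the multiplication table; this is possible precisely because $e_{i_0}$ does not lie in the span of the vectors $x_{j_1 \ldots j_n}$.

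With these choices, $\tau$ is automatically a $\phi$-trace: on a basis $n$-tuple $(e_{j_1}, \ldots, e_{j_n})$ either some $j_k = i_0$ (in which case $\phi$ vanishes by skew-symmetry) or all the $j_k$ are distinct and different from $i_0$, so $\phi(e_{j_1}, \ldots, e_{j_n}) = x_{j_1 \ldots j_n} \in \ker \tau$; multilinearity then extends the vanishing to all of $A^n$. Finally, to verify $\phi_\tau = [\cdot, \ldots, \cdot]$ I would evaluate both sides on a basis tuple $(e_{l_1}, \ldots, e_{l_{n+1}})$. If no $l_k$ equals $i_0$, both sides vanish (the left by the choice of $\tau$, the right by the form of the multiplication table). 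If $l_{k_0} = i_0$ for a unique index $k_0$, only the $k_0$-th summand in the definition of $\phi_\tau$ survives, yielding $(-1)^{k_0+1} [e_{i_0}, e_{l_1}, \ldots, \widehat{e_{l_{k_0}}}, \ldots, e_{l_{n+1}}]$; moving $e_{i_0}$ to the first position of $[e_{l_1}, \ldots, e_{l_{n+1}}]$ via $k_0 - 1$ transpositions produces the same expression with sign $(-1)^{k_0 - 1}$, and the two coincide. Multilinearity then extends the identity to all of $A^{n+1}$.

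The only genuine difficulty is the construction of $\tau$; all remaining steps are sign bookkeeping. The linear independence hypothesis is exactly what guarantees the compatibility of the two prescriptions $\tau(e_{i_0}) = -1$ and $\tau(x_{j_1 \ldots j_n}) = 0$, which is simultaneously essential for $\tau$ to be a $\phi$-trace and for $\phi_\tau$ to reproduce the original bracket with the correct nonzero coefficient on $e_{i_0}$-containing tuples.
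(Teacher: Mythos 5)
Your construction is the same as the paper's in outline — define $\phi(y_1,\ldots,y_n)=[e_{i_0},y_1,\ldots,y_n]$, check the Filippov identity by the ``fix one slot'' computation, and choose $\tau$ dual to $e_{i_0}$ — and the sign bookkeeping on tuples containing $e_{i_0}$ is fine. The gap is in your specification of $\tau$. You impose only the two conditions $\tau(e_{i_0})=-1$ and $\tau(x_{j_1\ldots j_n})=0$ and leave $\tau$ arbitrary on the rest of $A$; with such a $\tau$ your claim that $\phi_\tau$ vanishes on a basis tuple containing no $e_{i_0}$ ``by the choice of $\tau$'' is false. Indeed
\[
\phi_\tau(e_{l_1},\ldots,e_{l_{n+1}})=\sum_{k=1}^{n+1}(-1)^{k}\,\tau(e_{l_k})\,\bigl[e_{i_0},e_{l_1},\ldots,\widehat{e_{l_k}},\ldots,e_{l_{n+1}}\bigr],
\]
and nothing in your prescription forces $\tau(e_{l_k})=0$ for $l_k\neq i_0$. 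Concretely, take $n=2$, $d=5$, and the $3$-Lie algebra whose only nonzero basis bracket is $[e_1,e_2,e_3]=e_4$ (it is induced by the Lie algebra $[e_2,e_3]=e_4$, and satisfies the hypothesis with $i_0=1$). Your two conditions permit $\tau(e_5)=1$, and then $\phi_\tau(e_2,e_3,e_5)=-\tau(e_5)\,\phi(e_2,e_3)=-e_4\neq 0=[e_2,e_3,e_5]$, so $\phi_\tau$ is not the original bracket.

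The repair is exactly the choice the paper makes: take $\tau$ to be the $e_{i_0}$-coordinate functional, $\tau\bigl(\sum_k x_k e_k\bigr)=x_{i_0}$ (up to the overall sign coming from which sign convention for $\phi_\tau$ one uses), so that $\tau$ kills every basis vector other than $e_{i_0}$ and the unwanted terms above disappear. Be aware that with this $\tau$ the trace property $\tau\circ\phi=0$ is no longer automatic from the compatibility argument you gave: it requires each $x_{j_1\ldots j_n}$ to have zero $e_{i_0}$-coordinate, which is strictly stronger than ``$e_{i_0}$ and $x_{j_1\ldots j_n}$ linearly independent'' and is how the hypothesis must be read for the construction (the paper uses this reading implicitly). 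Apart from the choice of $\tau$, your argument coincides with the paper's.
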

\begin{proof}
We define an $n$-linear skew-symmetric map $\AKMSbracket{\cdot,...,\cdot}_0$ on $A$ and a form $\tau : A \to \mathbb{K}$ by:
\[ \AKMSbracket{e_{j_1},...,e_{j_n}}_0=x_{{j_1}...{j_n}}\in A, j_k\neq i_0\  (1\leq k \leq n) ; \  j_k : 1\leq k \leq n \text{ are all different,}\] \[ \AKMSbracket{e_{i_0},e_{j_1},...,e_{j_{n-1}}}_0 = 0 \]
and
\[ \tau(x) = \tau\AKMSpara{\sum_{k=0}^d x_k e_k} = x_{i_0}. \]
The bracket $\AKMSbracket{\cdot,...,\cdot}_0$ satisfies the fundamental identity:
\begin{align*}
\AKMSbracket{x_1,...,x_{n-1},\AKMSbracket{y_1,...,y_n}}_0 &= \AKMSbracket{e_{i_0},x_1,...,x_{n-1},\AKMSbracket{e_{i_0},y_1,...,y_n}}\\
&= \AKMSbracket{\AKMSbracket{e_{i_0},x_1,...,x_{n-1},e_{i_0}},y_1,...,y_n} \\
&+ \sum_{k=1}^n  \AKMSbracket{e_{i_0},y_1,...,y_{i-1},\AKMSbracket{e_{i_0},x_1,...,x_{n-1},y_i},y_{i+1},...,y_n}\\
&= \sum_{k=1}^n  \AKMSbracket{y_1,...,y_{i-1},\AKMSbracket{x_1,...,x_{n-1},y_i},y_{i+1},...,y_n}_0.
\end{align*}
The obtained $n$-Lie bracket $\AKMSbracket{\cdot,...,\cdot}$ and the trace $\tau$ given above indeed induce the $(n+1)$-Lie bracket:
\begin{align*}
\AKMSbracket{e_{i_0},e_{j_1},...,e_{j_n}}_{0,\tau} &= \tau(e_{i_0})\AKMSbracket{e_{j_1},...,e_{j_n}}_0 + \sum_{k=1}^n (-1)^k \tau(e_{j_k})\AKMSbracket{e_{i_0},e_{j_1},...,e_{j_{k-1}},e_{j_{k+1}},...,e_{j_n}}_0 \\
&= \tau(e_{i_0})\AKMSbracket{e_{j_1},...,e_{j_n}}_0 \\
&= x_{{j_1}...{j_n}}\\
&= \AKMSbracket{e_{i_0},e_{j_1},...,e_{j_n}}.
\end{align*}
For $i \neq i_0$:
\begin{align*}
 \AKMSbracket{e_i,e_{j_1},...,e_{j_n}}_{0,\tau} &= \tau(e_i)\AKMSbracket{e_{j_1},...,e_{j_n}} + \sum_{k=1}^n (-1)^k \tau(e_{j_k})\AKMSbracket{ e_i, e_{j_1},...,e_{j_{i-1}},e_{j_{i+1}},...,e_{j_n} }\\
& = 0 = \AKMSbracket{e_i,e_{j_1},...,e_{j_n}} .
\end{align*}

\end{proof}

\begin{theorem}[\cite{Filippov:nLie} $n$-Lie algebras of dimension less than or equal to $n+1$] \label{AKMSd4}
Any $n$-Lie algebra $A$ of dimension less than or equal to $n+1$ is isomorphic to one of the following $n$-ary algebras: (omitted brackets are either obtained by skew-symmetry or  $0$, we set $\AKMSpara{e_i}_{1 \leq i \leq dim A}$ to be  a basis of $A$)
\begin{enumerate}
\item If $dim A < n$ then $A$ is abelian.
\item If $dim A = n$, then we have 2 cases:
\begin{enumerate}
\item $A$ is abelian.
\item $\AKMSbracket{e_1,...,e_n} = e_1.$
\end{enumerate}
\item if $dim A = n+1$ then we have the following cases:
\begin{enumerate}
\item $A$ is abelian.
\item $\AKMSbracket{e_2,...,e_{n+1}} = e_1$.
\item $\AKMSbracket{e_1,...,e_n}=e_1$.
\item $\AKMSbracket{e_1,...,e_{n-1},e_{n+1}} = a e_n + b e_{n+1} ; \AKMSbracket{e_1,...,e_n} = c e_n + d e_{n+1}$, with $C=
\begin{pmatrix}
a & b \\
c & d
\end{pmatrix}$ an invertible matrix. Two such algebras, defined by matrices $C_1$ and $C_2$, are isomorphic if and only if there exists a scalar $\alpha$ and an invertible matrix $B$ such that $C_2 = \alpha B C_1 B^{-1}$.
\item $\AKMSbracket{e_1,...,\widehat{e_i},...,e_n}= a_i e_i$ for $ 1 \leq i \leq r$, $2<r= \operatorname{dim} D^1(A)\leq n$, $a_i \neq 0$
\item  $\AKMSbracket{e_1,...,\widehat{e_i},...,e_n}= a_i e_i$ for $ 1 \leq i \leq n+1$ which is simple.
\end{enumerate}

\end{enumerate}
\end{theorem}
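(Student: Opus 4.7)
The plan is to argue by dimension, dispatching the degenerate cases first and concentrating effort on $\dim A = n+1$. When $\dim A < n$, any $n$ vectors are linearly dependent, so skew-symmetry forces every bracket to vanish. When $\dim A = n$, the structure is encoded in the single vector $v=\AKMSbracket{e_1,\dots,e_n}$; if $v=0$ then $A$ is abelian, otherwise I choose a basis with $v = e_1$. The fundamental identity is automatic here because it involves brackets of $n+1$ vectors in an $n$-dimensional space, which must vanish by skew-symmetry.

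For $\dim A = n+1$, I parametrize the bracket by the $(n+1)\times(n+1)$ matrix $C=(c_{ij})$ defined by $\AKMSbracket{e_1,\dots,\widehat{e_i},\dots,e_{n+1}}=\sum_j c_{ij}e_j$. Evaluating the Filippov identity on basis fundamental objects $E_i = e_1\wedge\cdots\wedge\widehat{e_i}\wedge\cdots\wedge e_{n+1}$, I would show, after expanding out the nested brackets, that it is equivalent to a system of quadratic relations of the form $c_{ij}c_{jk}=c_{ik}c_{jj}$ (for distinct $i,j,k$) together with $c_{ij}c_{jj}=c_{ji}c_{ii}$. These say that the off-diagonal entries are rigidly controlled by the diagonal and rank $1$ submatrices.

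Next I compute the effect of a base change $e'_i=\sum_k b_{ik}e_k$. Since the wedge basis transforms by the adjugate of $B$, a direct calculation gives $C \mapsto (\det B)\,B^{-\top}C B^{-1}$. Setting $r=\dim D^1(A)=\operatorname{rank} C$, the classification becomes one of normal forms under this action. For $r=0$ I recover the abelian case (a). For $r=1$ the image is a line, and I must split according to whether $D^1(A)$ is spanned by a vector appearing among the bracket arguments or not; after normalization these yield (b) and (c) respectively. For $r=2$, restricting $C$ to the two nonzero rows and the two basis vectors spanning $D^1(A)$ gives an invertible $2\times 2$ block, from which the $GL_2$-equivalence $C_2=\alpha B C_1 B^{-1}$ in case (d) is immediate.

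The main obstacle is the range $r \geq 3$. Here I would use the relations $c_{ij}c_{jk}=c_{ik}c_{jj}$ to prove that $C$ is diagonalizable in the appropriate sense, ruling out any Jordan-type mixing; the key point is that once three independent columns are nonzero, the quadratic constraints force each off-diagonal entry to vanish in a suitable basis. Counting nonzero diagonal entries then yields (e) for $2<r\leq n$ and (f) for $r=n+1$. Finally, simplicity in case (f) follows by showing that any nonzero ideal $I$ must intersect $D^1(A)$ nontrivially, and then repeated application of the brackets $\AKMSbracket{e_1,\dots,\widehat{e_i},\dots,e_{n+1}}=a_ie_i$ with $a_i\neq 0$ forces $I$ to contain every basis vector. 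The delicate calculation is the diagonalization step for $r\geq 3$, which is where the Filippov identity genuinely constrains the geometry rather than just cutting out a variety.
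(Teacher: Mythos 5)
The paper itself offers no proof of this theorem --- it is quoted from Filippov \cite{Filippov:nLie} --- so there is nothing in the text to compare your argument against; I am assessing it on its own terms. Your overall strategy (encode the bracket in the matrix $C=(c_{ij})$ with $\AKMSbracket{e_1,\dots,\widehat{e_i},\dots,e_{n+1}}=\sum_j c_{ij}e_j$, work out the base-change action $C\mapsto(\det B)\,B^{-\top}CB^{-1}$ up to signs, and sort normal forms by $r=\dim D^1(A)=\operatorname{rank}C$) is the standard one, and the low-dimensional and low-rank cases are handled correctly. (In the case $\dim A=n$ the fundamental identity does not literally ``involve brackets of $n+1$ vectors''; what is true is that it reduces to the vanishing of an alternating $(n+1)$-linear form in the $n$ arguments $y_i$ together with the output vector, which vanishes for dimension reasons --- the conclusion is right, but the justification should be stated that way.)

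The genuine gap is that your translation of the Filippov identity into the quadratic relations $c_{ij}c_{jk}=c_{ik}c_{jj}$ and $c_{ij}c_{jj}=c_{ji}c_{ii}$ is incorrect, and this is exactly the step on which your $r\ge3$ diagonalization and your exclusion of Jordan-type mixing rest. A counterexample is already contained in the theorem: for $n=3$, case (d) with $C=\left(\begin{smallmatrix}1&1\\0&1\end{smallmatrix}\right)$, i.e.\ $\AKMSbracket{e_1,e_2,e_4}=e_3+e_4$ and $\AKMSbracket{e_1,e_2,e_3}=e_4$, is a bona fide $3$-Lie algebra (the only nontrivial instances of the fundamental identity involve $X=e_1\wedge e_2$ and are easily checked), yet with $f_i=\AKMSbracket{e_1,\dots,\widehat{e_i},\dots,e_4}=\sum_jc_{ij}e_j$ it has $c_{34}c_{44}=1\neq 0=c_{43}c_{33}$. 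What the fundamental identity actually produces, when evaluated on basis fundamental objects, is a family of \emph{linear relations among the rows} $f_i$ whose coefficients are entries of the antisymmetric part of the sign-twisted matrix $\AKMSpara{(-1)^ic_{ij}}$; for instance, for $n=3$, the pair $X=e_1\wedge e_2$, $Y=e_1\wedge e_3\wedge e_4$ yields $(c_{23}+c_{32})f_4+(c_{24}-c_{42})f_3-(c_{43}+c_{34})f_2=0$. When $C$ is invertible these relations force the twisted matrix to be symmetric (whence the diagonal forms (e) and (f) by congruence), but at lower rank they only constrain it modulo $\ker C$ --- which is precisely why non-symmetric $2\times2$ blocks survive in case (d) and why the equivalence there is conjugation rather than congruence. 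You need to derive and work from these correct relations; as written, your constraints would both exclude algebras that belong to the list and fail to justify the diagonal normal forms for $r\ge 3$.
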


\begin{theorem}[\cite{Bai:nLie:n+2} $(n+2)$-dimensional $n$-Lie algebras] \label{AKMSd5}
Let $\mathbb{K}$ be an algebraically closed field. Any $(n+2)$-dimensional $n$-Lie algebra $A$  is isomorphic to one of the $n$-ary algebras listed below, where $A^1$ denotes $\AKMSbracket{A,...,A}$ and  $\left(e_i\right)_{1\leq i \leq n+2}$ being a basis:

\begin{enumerate}
\item If $dim A^1 = 0$ then $A$ is abelian.
\item If $dim A^1 = 1$, let $A^1=\langle e_1 \rangle$, then we have 
\begin{enumerate}
\item $A^1 \subseteq Z(A)$ : $\AKMSbracket{e_2,...,e_{n+1}}=e_1$.
\item $A^1 \nsubseteq Z(A)$ : $\AKMSbracket{e_1,...,e_n}=e_1$.
\end{enumerate}
\item If $dim A^1 = 2$, let $A^1 = \langle e_1,e_2 \rangle$, then we have 
\begin{enumerate}
\item $\AKMSbracket{e_2,...,e_{n+1}}=e_1 ; \AKMSbracket{e_3,...,e_{n+2}}=e_2$.
\item $\AKMSbracket{e_2,...,e_{n+1}}=e_1 ; \AKMSbracket{e_2,e_4,...,e_{n+2}}=e_2 ; \AKMSbracket{e_1,e_4,...,e_{n+2}}=e_1$.
\item $\AKMSbracket{e_2,...,e_{n+1}}=e_1 ; \AKMSbracket{e_1,e_3,...,e_{n+1}}=e_2$.
\item $\AKMSbracket{e_2,...,e_{n+1}}=e_1 ; \AKMSbracket{e_1,e_3,...,e_{n+1}}=e_2 ; \AKMSbracket{e_2,e_4,...,e_{n+2}}=e_2 ;\\ \AKMSbracket{e_1,e_4,...,e_{n+2}}=e_1$.
\item $\AKMSbracket{e_2,...,e_{n+1}}= \alpha e_1 + e_2 ; \AKMSbracket{e_1,e_3,...,e_{n+1}}=e_2$.
\item $\AKMSbracket{e_2,...,e_{n+1}}=\alpha e_1+e_2 ; \AKMSbracket{e_1,e_3,...,e_{n+1}}=e_2 ; \AKMSbracket{e_2,e_4,...,e_{n+2}}=e_2 ;\\ \AKMSbracket{e_1,e_4,...,e_{n+2}}=e_1$.
\item $\AKMSbracket{e_1,e_3,...,e_{n+1}}=e_1 ; \AKMSbracket{e_2,e_3,...,e_{n+1}}=e_2$.
\end{enumerate}
where $\alpha \in \mathbb{K} \setminus \left\{0\right\}$
\item If $dim A^1 = 3$, let $A^1=\langle e_1,e_2,e_3 \rangle$, then we have 
\begin{enumerate}
\item $\AKMSbracket{e_2,...,e_{n+1}}=e_1 ; \AKMSbracket{e_2,e_4,...,e_{n+2}}=-e_2 ; \AKMSbracket{e_3,...,e_{n+2}}=e_3$.
\item $\AKMSbracket{e_2,...,e_{n+1}}=e_1 ; \AKMSbracket{e_3,...,e_{n+2}}=e_3 + \alpha e_2 ; \AKMSbracket{e_2,e_4,...,e_{n+2}}=e_3 ; \AKMSbracket{e_1,e_4,...,e_{n+2}}=e_1$.
\item $\AKMSbracket{e_2,...,e_{n+1}}=e_1 ; \AKMSbracket{e_3,...,e_{n+2}}=e_3 ; \AKMSbracket{e_2,e_4,...,e_{n+2}}=e_2 ; \AKMSbracket{e_1,e_4,...,e_{n+2}}=2e_1$.
\item $\AKMSbracket{e_2,...,e_{n+1}}=e_1 ; \AKMSbracket{e_1,e_3,...,e_{n+1}}=e_2 ; \AKMSbracket{e_1,e_2,e_4,...,e_{n+1}}=e_3$.
\item $\AKMSbracket{e_1,e_4,...,e_{n+2}}=e_1 ; \AKMSbracket{e_2,e_4,...,e_{n+2}}=e_3 ; \AKMSbracket{e_3,...,e_{n+2}}=\beta e_2+(1+\beta)e_3$,\\ $\beta \in \mathbb{K} \setminus \left\{0,1\right\}$.
\item $\AKMSbracket{e_1,e_4,...,e_{n+2}}=e_1 ; \AKMSbracket{e_2,e_4,...,e_{n+2}}=e_2 ; \AKMSbracket{e_3,...,e_{n+2}}=e_3$.
\item $\AKMSbracket{e_1,e_4,...,e_{n+2}}=e_2 ; \AKMSbracket{e_2,e_4,...,e_{n+2}}=e_3 ; \AKMSbracket{e_3,...,e_{n+2}}=s e_1 + t e_2 + u e_3$. And $n$-Lie algebras corresponding to this case with coefficients $s,t,u$ and $s',t',u'$ are isomorphic if and only if there exists a non-zero element $r \in K$ such that 
\[ s=r^3 s' ; t=r^2 t' ; u=r u' .\]
\end{enumerate}
\item If $dim A^1 = r$ with $4 \leq r \leq n+1$, let $A^1 = \langle e_1,e_2,...,e_r \rangle$, then we have 
\begin{enumerate}
\item $\AKMSbracket{e_2,...,e_{n+1}}=e_1 ; \AKMSbracket{e_3,...,e_{n+2}}=e_2 ; ... ;  \AKMSbracket{e_2,...,e_{i-1},e_{i+1},...,e_{n+2}}=e_i ; \\ \AKMSbracket{e_2,...,e_{r-1},e_{r+1},...,e_{n+2}}=e_r$.
\item $\AKMSbracket{e_2,...,e_{n+1}}=e_1 ; ... ; \AKMSbracket{e_1,...,e_{i-1},e_{i+1},e_{n+1}}=e_i ; ... ; \\  \AKMSbracket{e_1,...,e_{r-1},e_{r+1},e_{n+1}}=e_r$.
\end{enumerate}
\end{enumerate}
\end{theorem}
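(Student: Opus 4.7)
The plan is to classify $(n+2)$-dimensional $n$-Lie algebras over the algebraically closed field $\mathbb{K}$ by case analysis on the dimension $r:=\dim A^1$ of the derived ideal $A^1 = \AKMSbracket{A,\ldots,A}$, which takes values $0\leq r \leq n+2$. Observing that $A/A^1$ is abelian in the $n$-Lie sense and that $A^1$ is itself an ideal, each case amounts to choosing a basis $(e_i)$ adapted to the flag $A^1 \subset A$, writing down the possible brackets up to skew-symmetry, imposing the fundamental identity \eqref{AKMS-FI}, and finally normalizing the remaining structure constants via the action of $GL(A)$ preserving this flag.

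In the trivial case $r=0$ the algebra is abelian. For $r=1$, write $A^1 = \mathbb{K}e_1$; every bracket is a scalar multiple of $e_1$, so the bracket is encoded by a nonzero alternating $n$-form $\omega$ on $A$ valued in $\mathbb{K}$. Filippov's identity becomes automatic since all double brackets vanish when $e_1$ lies in an inner bracket slot (either because $e_1$ is central, or because the inner bracket is proportional to $e_1$). The dichotomy $e_1\in Z(A)$ versus $e_1\notin Z(A)$ splits this into the two listed normal forms, each obtained by standard reduction of an alternating form to canonical shape.

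For $r=2$ and $r=3$ the work is genuinely combinatorial. With $A^1 = \langle e_1,\ldots,e_r\rangle$, I would analyze the map $\Lambda^{n-1}A \to \mathrm{End}(A^1)$ induced by $X\mapsto \mathrm{ad}_X|_{A^1}$, use the Leibniz rule $\mathrm{ad}_{X\cdot Y}=\mathrm{ad}_X\mathrm{ad}_Y - \mathrm{ad}_Y\mathrm{ad}_X$ to constrain which commuting families of endomorphisms can appear, and then use changes of basis within $A^1$ and within a complement to reduce to a handful of normal forms. The parametrized families (cases (e), (f) in the $r=2$ list and (e), (g) in the $r=3$ list) emerge because eigenvalue ratios of certain ad-operators are $GL$-invariant; in the continuous family case (g) of $r=3$, the scaling $e_k \mapsto \rho^k e_k$ (for a suitable grading) rescales $(s,t,u)\mapsto (\rho^3 s, \rho^2 t, \rho u)$, which is then shown to be the only remaining freedom. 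For $r\geq 4$ the derived ideal is so large that the fundamental identity forces the bracket to be a ``cyclic'' one, and the two families in item 5 exhaust the possibilities.

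To finish, each listed bracket must be verified to satisfy \eqref{AKMS-FI}, which is a direct computation using skew-symmetry, and pairwise non-isomorphism must be established; this is done by exhibiting invariants ($\dim A^1$, $\dim Z(A)$, nilpotency class, the image of $\mathrm{ad}$ on $A^1$, and in parametrized families the conjugation invariants of the distinguished ad-operators) that distinguish the classes. I expect the main obstacle to be the sheer bookkeeping in cases $r=2,3$: one must simultaneously enumerate all admissible patterns of nonzero brackets, propagate the fundamental identity across overlapping index sets, and track the residual group of basis changes that remains after each normalization step. The continuous invariant in case (g) of $r=3$ is the most delicate point, as one must prove that the triple $(s,t,u)$ modulo the cubic scaling action is a complete isomorphism invariant rather than merely a necessary one.
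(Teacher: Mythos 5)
This theorem is not proved in the paper at all: it is quoted verbatim from the cited reference \cite{Bai:nLie:n+2}, so there is no in-paper argument to compare your proposal against. Judged on its own terms, what you have written is a plan rather than a proof, and the plan leaves the actual content of the theorem unestablished. The theorem's content \emph{is} the explicit lists of normal forms together with the isomorphism criteria; your proposal describes the general strategy (stratify by $r=\dim A^1$, study $\mathrm{ad}_X|_{A^1}$, normalize by basis changes) but never carries out the enumeration, so none of the specific brackets in items 3--5, nor the claim that these lists are exhaustive, is actually derived. The two hardest points are explicitly deferred: you acknowledge that exhaustiveness in the cases $r=2,3$ and the completeness of the invariant $(s,t,u)$ modulo the scaling $s=r^3s'$, $t=r^2t'$, $u=ru'$ are ``the main obstacle'' and ``the most delicate point'' without resolving them.

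Two further concrete gaps. First, your case analysis announces $0\leq r\leq n+2$, but the theorem's list stops at $r=n+1$; you never argue that $\dim A^1=n+2$ (a perfect $(n+2)$-dimensional $n$-Lie algebra) cannot occur, which is a nontrivial exclusion that the classification silently relies on. Second, the assertion for $r\geq 4$ that ``the fundamental identity forces the bracket to be cyclic'' is stated with no supporting computation, yet this is precisely where item 5's two families must be shown to exhaust all possibilities. If you want a complete argument you should either carry out the case-by-case normalization (as in the cited source of Bai, Song and Zhang) or explicitly present the theorem as an imported classification, which is what the paper itself does.
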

The $n$-Lie algebras which are induced by $(n-1)$-Lie algebras are described in the following proposition:
\begin{proposition}
Let $\mathbb{K}$ be an algebraically closed field of characteristic $0$. According to Theorems \ref{AKMSd4} and \ref{AKMSd5}, the $n$-Lie algebras induced by $(n-1)$-Lie algebras of dimension $d \leq n+2$ are:
\begin{itemize}
\item If $d=n$, Theorem \ref{AKMSd4} (2.)
\item If $d=n+1$,  Theorem \ref{AKMSd4} (3.): a,b,c,d,e.
\item If $d=n+2$, Theorem \ref{AKMSd5}: (1.) (2.) (3.) (4.) and (5.) for $r \leq n$.
\end{itemize}
\end{proposition}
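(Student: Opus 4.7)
The proof proceeds by direct case analysis over the classifications in Theorems \ref{AKMSd4} and \ref{AKMSd5}, in two directions.

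Forward direction: For each algebra listed in the statement, one must exhibit an $(n-1)$-Lie algebra and a trace producing it. The tool is Proposition \ref{AKMS-3to2}, read with its ambient arity lowered by one: if an $n$-Lie algebra admits a basis vector $e_{i_0}$ belonging to every nonzero bracket of the multiplication table, with the resulting element linearly independent from $e_{i_0}$, then the $n$-Lie algebra is induced by an $(n-1)$-Lie algebra. Thus in each positive case the task reduces to identifying such an $e_{i_0}$. The abelian cases are immediate. For Theorem \ref{AKMSd4} (2b) one may take $e_{i_0}=e_n$; for (3a)--(3e), the brackets are indexed by omitting a single basis element, so any index omitted in \emph{none} of the nonzero brackets serves as $i_0$ (in (3e) with $r\leq n$, $e_{n+1}$ belongs to every nonzero bracket). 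For Theorem \ref{AKMSd5}, one performs the same computation subcase by subcase: in cases (1)--(4) and (5) with $r\leq n$, the intersection of the index sets supporting nonzero brackets contains elements of $\{e_{r+1},\ldots,e_{n+2}\}$, any of which gives a valid $e_{i_0}$; the right-hand sides lie in $\langle e_1,\ldots,e_r\rangle$, linearly independent from $e_{i_0}$.

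Reverse direction: It remains to show that the algebras \emph{not} appearing on the list---namely Theorem \ref{AKMSd4} (3f) and Theorem \ref{AKMSd5} (5) with $r=n+1$---are not induced. The obstruction is the solvability bound of Theorem \ref{AKMSsolv2}: every algebra induced by a trace on an $(n-1)$-Lie algebra satisfies $D^2=\{0\}$. In Theorem \ref{AKMSd4} (3f) the algebra is simple, so $D^1=A$ and hence $D^2=A\neq\{0\}$. In Theorem \ref{AKMSd5} (5) with $r=n+1$ one has $A^1=\langle e_1,\ldots,e_{n+1}\rangle$, and the defining bracket $[e_2,\ldots,e_{n+1}]=e_1$ already takes place within $A^1$ and is nonzero, so $e_1\in D^2$. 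Both cases thus violate the necessary condition and are excluded.

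The main obstacle is merely the scale of the case analysis, especially for the many subcases of Theorem \ref{AKMSd5}. The substantive content is uniform: for each multiplication table, locate a basis index common to every nonzero bracket, check linear independence of the codomain from the corresponding $e_{i_0}$, and invoke Proposition \ref{AKMS-3to2}; the excluded entries are recognized via a single $D^2\neq\{0\}$ computation. No deeper structural argument is needed beyond that proposition and the second-derived-series vanishing of Theorem \ref{AKMSsolv2}.
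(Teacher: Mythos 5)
Your proposal is correct and follows essentially the same route as the paper: the listed algebras are recognized as induced via Proposition \ref{AKMS-3to2} (applied with the arity shifted down by one), and the excluded cases, namely the simple algebra of Theorem \ref{AKMSd4} (3f) and Theorem \ref{AKMSd5} (5) with $r=n+1$, are ruled out because their derived algebras are not abelian, contradicting the $D^2=\{0\}$ condition of Theorem \ref{AKMSsolv2}. Your write-up is in fact somewhat more explicit than the paper's, which leaves the choice of $e_{i_0}$ in each subcase implicit.
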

\begin{proof}
By applying Proposition \ref{AKMS-3to2}, it turns out that the $n$-Lie algebras given in Theorem \ref{AKMSd4} (2.) and (3.) a,b,c,d,e and Theorem \ref{AKMSd5} (1.), (2.), (3.), (4.) and (5.) for $r\leq n$ are induced by $(n-1)$-Lie algebras. The remaining algebras have derived algebras which are not abelian then they cannot be induced by $(n-1)$-Lie algebras (Theorem \ref{AKMSsolv2}). 
\end{proof}

We list, below, all $3$-dimensional and solvable $4$-dimensional Lie algebras and all $3$-Lie algebras induced by them. The $3$-dimensional Lie algebras are given in \cite{Patera_et_al:invariants} and $4$-dimensional ones were provided  partially  in \cite{degraaf4solvable}. For every 3-Lie algebra in Theorem \ref{AKMSd4} (case $n=3$) that can be induced by a Lie algebra, we provide examples of Lie algebra inducing it.
\begin{theorem}[\cite{Patera_et_al:invariants} $3$-dimensional Lie algebras]
Let $\mathfrak{g}$ be a Lie algebra and $\left(e_i\right)_{1 \leq i \leq 3}$ be a basis of $\mathfrak{g}$, then $\mathfrak{g}$ is isomorphic to one of the following algebras (remaining brackets are either obtained by skew-symmetry or zero)
\begin{enumerate}
\item abelian Lie algebra $\AKMSbracket{x,y}=0, \forall x,y \in \mathfrak{g}$.
\item $ L(3,-1): \AKMSbracket{e_1,e_2}=e_1$.
\item $ L(3,1): \AKMSbracket{e_1,e_2}=e_3$.
\item $ L(3,2,a): \AKMSbracket{e_1,e_3}=e_1 ; \AKMSbracket{e_2,e_3}=a e_2 ; 0 < |a| \leq 1$.
\item $ L(3,3): \AKMSbracket{e_1,e_3}=e_1 ; \AKMSbracket{e_2,e_3}=e_1+e_2$.
\item $ L(3,4,a): \AKMSbracket{e_1,e_3}= a e_1-e_2 ; \AKMSbracket{e_2,e_3} = e_1+a e_2 ; a\geq 0$.
\item $ L(3,5): \AKMSbracket{e_1,e_2}=e_1 ; \AKMSbracket{e_1,e_3}=-2e_2 ; \AKMSbracket{e_2,e_3}=e_3$. 
\item $ L(3,6): \AKMSbracket{e_1,e_2}=e_3 ; \AKMSbracket{e_1,e_3}=-e_2 ; \AKMSbracket{e_2,e_3}=e_1$.
\end{enumerate}
\end{theorem}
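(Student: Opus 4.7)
The plan is to proceed by case analysis on $r := \dim [\mathfrak{g}, \mathfrak{g}]$, which takes values in $\{0, 1, 2, 3\}$ since $\dim \mathfrak{g} = 3$. The extreme cases are quick: $r = 0$ is the abelian algebra (case 1), and $r = 3$ forces $\mathfrak{g}$ to be perfect, hence semisimple in dimension $3$; over the base field the two possibilities are $\mathfrak{sl}_2$ and $\mathfrak{so}_3$, yielding $L(3,5)$ and $L(3,6)$.

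For $r = 1$, let $u$ span $[\mathfrak{g}, \mathfrak{g}]$ and pick $x, y \in \mathfrak{g}$ with $[x, y] = u$. If $u$ is linearly independent from $\{x, y\}$, we can take $(x, y, u)$ as a basis and obtain the Heisenberg-type algebra $L(3, 1)$. If $u \in \operatorname{span}(x, y)$, then after completing and rescaling the basis one reaches the canonical form $[e_1, e_2] = e_1$, which is $L(3, -1)$. The Jacobi identity is automatic since all triple brackets involve a repeated entry up to scalar.

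For $r = 2$, the derived algebra $\mathfrak{h} := [\mathfrak{g}, \mathfrak{g}]$ is a $2$-dimensional ideal, and it is abelian because $[\mathfrak{h}, \mathfrak{h}] \subseteq \mathfrak{h}$ would otherwise have to be a nonzero proper subspace stable under all of $\operatorname{ad}_\mathfrak{g}$, which by a short argument using the Jacobi identity is incompatible with $\dim \mathfrak{h} = 2$ and $\mathfrak{h}$ being the full derived algebra. Choose $e_3 \notin \mathfrak{h}$; the adjoint $\operatorname{ad}_{e_3}|_\mathfrak{h}$ is an invertible endomorphism of $\mathfrak{h}$ (if its image were smaller, the derived algebra would shrink). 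The isomorphism type of $\mathfrak{g}$ is then determined by the conjugacy class of this matrix up to simultaneous rescaling of $e_3$. Running through the Jordan normal forms of an invertible $2 \times 2$ matrix over $\mathbb{R}$ produces exactly the three families: two distinct real eigenvalues (giving $L(3, 2, a)$ with $a$ the ratio after normalization), a single Jordan block (giving $L(3, 3)$), and a complex conjugate pair (giving $L(3, 4, a)$).

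The main obstacle is the bookkeeping in case $r = 2$: one must verify that the parametrizations $a \in (0, 1]$ in $L(3,2,a)$ and $a \geq 0$ in $L(3,4,a)$ give pairwise non-isomorphic algebras (using that the ratio and product of eigenvalues are invariants modulo the rescaling symmetry $e_3 \mapsto \lambda e_3$), and that no case has been missed. Since the statement is cited verbatim from \cite{Patera_et_al:invariants}, I would simply refer to that source for the verification and treat the enumeration above as a sketch of the underlying argument.
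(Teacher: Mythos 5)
The paper does not actually prove this theorem: it is quoted verbatim from Patera--Sharp--Winternitz--Zassenhaus and used only as input to the list of induced $3$-Lie algebras, so there is no internal proof to compare against. Your sketch follows the standard route (case analysis on $r=\dim[\mathfrak{g},\mathfrak{g}]$, with the $r=2$ case reduced to the conjugacy class of $\operatorname{ad}_{e_3}$ restricted to the derived algebra, up to rescaling of $e_3$), and the overall architecture is sound.

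One step, as literally stated, would fail. In the case $r=1$ you split according to whether $u=[x,y]$ is linearly independent from $\{x,y\}$, claiming that independence yields the Heisenberg algebra $L(3,1)$. That is not the right dichotomy: take $[x,y]=u$, $[x,u]=u$, $[y,u]=0$. Here $u\notin\operatorname{span}(x,y)$ and the Jacobi identity holds, yet the algebra is $L(3,-1)$ plus a one-dimensional center (set $e_1=u$, $e_2=-x$, $e_3=y-u$), not the Heisenberg algebra. The correct criterion is whether the derived line is central: if $[\mathfrak{g},\mathfrak{g}]\subseteq Z(\mathfrak{g})$ one gets $L(3,1)$, otherwise $L(3,-1)$. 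Two further points deserve an actual argument rather than a gesture: that $\mathfrak{h}=[\mathfrak{g},\mathfrak{g}]$ is abelian when $r=2$ (cleanest: $D^3(\mathfrak{g})=0$ so $\mathfrak{g}$ is solvable, hence $[\mathfrak{g},\mathfrak{g}]$ is nilpotent, and a $2$-dimensional nilpotent Lie algebra is abelian), and that a perfect $3$-dimensional algebra is semisimple (a nonzero radical would force the quotient by it to be semisimple of dimension at most $2$, hence zero, making $\mathfrak{g}$ solvable and contradicting $r=3$). With those repairs, together with the invariance check for the parameters $a$ that you already flag, the sketch becomes a complete proof over $\mathbb{R}$; deferring the parameter bookkeeping to the cited source is reasonable given that the paper itself only cites the result.
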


\begin{remark}
The classification given above is for the ground field $\mathbb{K} = \mathbb{R}$, if $\mathbb{K} = \mathbb{C}$ then $L\AKMSpara{3,2,\frac{x-i}{x+i}}$ is isomorphic to $L(3,4,x)$ and $L(3,5)$ is isomorphic to $L(3,6)$.
\end{remark}

\begin{theorem}[\cite{degraaf4solvable} Solvable $4$-dimensional Lie algebras]\label{Class2Lie}
Let $\mathfrak{g}$ be a solvable Lie algebra, and $\left(e_i\right)_{1 \leq i \leq 4}$ be a basis of $\mathfrak{g}$, then $\mathfrak{g}$ is isomorphic to one of the following algebras: (remaining brackets are either obtained by skew-symmetry or zero)

\begin{enumerate}
\item The abelian Lie algebra $\AKMSbracket{x,y}=0, \forall x,y \in \mathfrak{g}$.
\item $M^2$: $\AKMSbracket{e_1,e_4}=e_1 ; \AKMSbracket{e_2,e_4}=e_2 ; \AKMSbracket{e_3,e_4}=e_3$.
\item $M^3_a$ : $\AKMSbracket{e_1,e_4}=e_1 ; \AKMSbracket{e_2,e_4}=e_3 ; \AKMSbracket{e_3,e_4}=-a e_2 + (a+1)e_3$.
\item $M^4$: $\AKMSbracket{e_2,e_4}=e_3 ; \AKMSbracket{e_3,e_4}=e_3$.
\item $M^5$: $\AKMSbracket{e_2,e_4}=e_3$.
\item $M^6_{a,b}$ : $\AKMSbracket{e_1,e_4}=e_2 ; \AKMSbracket{e_2,e_4}=e_3 ; \AKMSbracket{e_3,e_4}= ae_1+be_2+e_3$.
\item $M^7_{a,b}$: $\AKMSbracket{e_1,e_4}=e_2 ; \AKMSbracket{e_2,e_4}=e_3 ; \AKMSbracket{e_3,e_4}= ae_1+be_2$ ($a=b \neq 0$ or $a=0$ or $b=0$).
\item $M^8$ : $\AKMSbracket{e_1,e_2}=e_2 ; \AKMSbracket{e_3,e_4}=e_4$.
\item $M^9_a$ : $\AKMSbracket{e_1,e_4}=e_1+ae_2 ; \AKMSbracket{e_2,e_4}=e_1 ; \AKMSbracket{e_1,e_3}=e_1 ; \AKMSbracket{e_2,e_3}=e_2$ ($X^2-X-a$ has no root in $\mathbb{K}$).
\item $M^{11}$: $\AKMSbracket{e_1,e_4}=e_1 ; \AKMSbracket{e_3,e_4}=e_3 ; \AKMSbracket{e_1,e_3}=e_2$.
\item $M^{12}$: $\AKMSbracket{e_1,e_4}=e_1 ; \AKMSbracket{e_2,e_4}=e_2 ; \AKMSbracket{e_3,e_4}=e_3 ; \AKMSbracket{e_1,e_3}=e_2$.
\item $M^{13}_a$: $\AKMSbracket{e_1,e_4}=e_1+ae_3 ; \AKMSbracket{e_2,e_4}=e_2 ; \AKMSbracket{e_3,e_4}=e_1 ; \AKMSbracket{e_1,e_3}=e_2 $.
\item $M^{14}_a$: $\AKMSbracket{e_1,e_4}=ae_3 ; \AKMSbracket{e_3,e_4}=e_1 ; \AKMSbracket{e_1,e_3}=e_2$. ($M^{14}_a$ is isomorphic to $M^{14}_b$ if and only if $a=\alpha^2 b$ for some $\alpha \neq 0$).
\end{enumerate}
\end{theorem}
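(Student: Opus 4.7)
The plan is to follow the standard case analysis for low-dimensional solvable Lie algebras, organized by the dimension and isomorphism type of the derived subalgebra $\mathfrak{g}^1 = [\mathfrak{g},\mathfrak{g}]$. Since $\mathfrak{g}$ is solvable in characteristic zero, Lie's theorem (after extending scalars to the algebraic closure) shows that $\mathfrak{g}^1$ is nilpotent and a proper ideal of $\mathfrak{g}$. For $\dim\mathfrak{g}=4$ this forces $\dim\mathfrak{g}^1\in\{0,1,2,3\}$, and I would partition the classification according to this integer.

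For each admissible value $r:=\dim\mathfrak{g}^1$, I would first enumerate the possible isomorphism types of $\mathfrak{g}^1$ itself: for $r=0$ it is trivial, for $r\in\{1,2\}$ it is necessarily abelian, and for $r=3$ it is abelian or the three-dimensional Heisenberg algebra. Next I would choose a vector-space complement $V$ to $\mathfrak{g}^1$ inside $\mathfrak{g}$, of dimension $4-r$, so that every $v\in V$ acts on $\mathfrak{g}^1$ via $\operatorname{ad} v$. The Jacobi identity then translates into three constraints: each $\operatorname{ad} v$ must be a derivation of $\mathfrak{g}^1$; the commutators $[\operatorname{ad} v_1,\operatorname{ad} v_2]$ are inner derivations induced by $[v_1,v_2]\in\mathfrak{g}^1$; and the antisymmetric map $V\wedge V\to\mathfrak{g}^1$, $(v_1,v_2)\mapsto[v_1,v_2]$, is a Chevalley--Eilenberg $2$-cocycle for the representation so defined.

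Having fixed these data, I would then normalize them under the natural group of admissible base changes $\operatorname{GL}(V)\times\operatorname{Aut}(\mathfrak{g}^1)$, together with the additive action of $1$-cochains on the cocycle. In the majority of cases this reduces to classifying (tuples of) derivations of $\mathfrak{g}^1$ up to simultaneous conjugation and scaling, which, via the rational canonical form, is what produces the parametric families $M^3_a$, $M^6_{a,b}$, $M^7_{a,b}$, $M^9_a$, $M^{13}_a$, $M^{14}_a$. The discrete algebras $M^2$, $M^4$, $M^5$, $M^8$, $M^{11}$, $M^{12}$ correspond to specific rank and eigenvalue configurations of the adjoint action, combined with the (non)triviality of the $2$-cocycle encoding $[V,V]\subseteq\mathfrak{g}^1$.

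The main obstacle is twofold. First, the field-dependent subcases require genuine care: the condition that $X^2-X-a$ have no root in $\mathbb{K}$, which governs the occurrence of $M^9_a$, already shows that one cannot simply work over $\overline{\mathbb{K}}$ and descend the classification. Second, proving pairwise non-isomorphism across the entire list, especially within the continuous families, requires sharp invariants such as the characteristic polynomial of a generic adjoint operator, the dimensions of the terms in the derived and central descending series, and the nilpotent radical. These invariant-theoretic distinctions constitute the bulk of the work and are precisely the contribution of \cite{degraaf4solvable}, whose statement we are quoting here.
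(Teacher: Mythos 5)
First, note that the paper itself offers no proof of this statement: Theorem \ref{Class2Lie} is imported verbatim (minus $M^{10}_a$, which only occurs in characteristic $2$ and is therefore irrelevant here) from de Graaf's classification \cite{degraaf4solvable}, and is used in the sequel only as a source of examples. So there is no internal argument to compare your proposal against; it can only be judged on its own merits.

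On those merits, your outline is the standard and structurally sound strategy: $\mathfrak{g}^1=[\mathfrak{g},\mathfrak{g}]$ is a nilpotent proper ideal (Lie's theorem in characteristic $0$), one stratifies by $r=\dim\mathfrak{g}^1\in\{0,1,2,3\}$ and by the isomorphism type of $\mathfrak{g}^1$ (abelian, or Heisenberg when $r=3$ --- the latter genuinely occurs, e.g.\ for $M^{11}$--$M^{14}$), and one then classifies the non-abelian extensions via the induced derivations, the compatibility of their commutators with inner derivations, and the $2$-cocycle recording $[V,V]$, all modulo $\operatorname{GL}(V)\times\operatorname{Aut}(\mathfrak{g}^1)$ and coboundaries. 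However, as written this is a programme rather than a proof: every step that actually produces the list is deferred. Nothing in the proposal derives the specific normal forms or their side conditions --- the restriction ``$a=b\neq 0$ or $a=0$ or $b=0$'' in $M^7_{a,b}$, the irreducibility of $X^2-X-a$ in $M^9_a$, the isomorphism criterion $a=\alpha^2 b$ for $M^{14}_a$, or the matrix-conjugacy criterion implicit in the $M^3_a$, $M^6_{a,b}$ family --- and the completeness and pairwise non-isomorphism of the thirteen families are asserted to be ``the bulk of the work'' and left to the cited reference. You correctly identify where the difficulty lies (rational canonical forms over a non-closed field, descent from $\overline{\mathbb{K}}$, separating invariants), but identifying the difficulty is not the same as resolving it; to count as a proof, the case analysis for each $(r,\mathfrak{g}^1,\operatorname{ad}$-data$)$ stratum would have to be carried out explicitly, which is precisely what \cite{degraaf4solvable} does (by a somewhat more computational route, using Gr\"obner-basis methods to solve the Jacobi and isomorphism equations). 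Since the paper merely quotes the result, your honest acknowledgment that you are doing the same is acceptable, but the proposal should not be mistaken for an independent proof.
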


In the following, for each $3$-Lie algebra (Theorem \ref{AKMSd4}), except the simple one which cannot be induced by a Lie algebra, we give examples of Lie algebras, from the above lists, that induce it: 
\begin{itemize}
\item The only non abelian $3$-dimensional $3$-Lie algebra is defined by \[ [e_1,e_2,e_3]=e_1,\] it is induced by the Lie algebra $L(3,-1)$.
\item Non abelian $4$-dimensional $3$-Lie algebras which can be induced by Lie algebras are listed in the table below, each one with Lie algebras inducing it:

\begin{tabular}{cc} \hline
\textbf{$3$-Lie algebra} & \textbf{Lie algebras inducing it} \\
\hline
$[e_1,e_2,e_3]=e_1$ & $M^3, M^4$ \\ \hline
$[e_2,e_3,e_4] = e_1$ & $M^5, M^{12}, M^{13}_a (a \neq 0), M^{14}_b$ \\ \hline
$\left.\begin{array}[c]{c}
[e_1,e_2,e_4] = a_{11} e_3 + a_{12} e_4 \\
\left[e_1,e_2,e_3\right] = a_{21} e_3 + a_{22} e_4
\end{array}\right.$ 
& $M^6_{0,b}, M^7_{0,b}, M^8, M^9_a, M^{11}, M^{13}_0$ \\ \hline

$\begin{array}{c}\AKMSbracket{e_2,e_3,e_4}= e_1 \\ \AKMSbracket{e_1,e_3,e_4} = e_2 \\ \AKMSbracket{e_1,e_2,e_4} = e_3 \end{array}$ & $\mathfrak{gl}_2(\mathbb{K})$ \\ \hline
\end{tabular}
\end{itemize}


\end{document}